\hfill \footnotesize {\rm M. Eshaghi Gordji and H. Khodaie }
\hfill \footnotesize {\rm Solution and stability of  generalized ...}  \hfill$~$}
\begin{document}
\thispagestyle{empty}
 \setcounter{page}{1}

\begin{center}
{\large\bf Solution and stability of generalized  mixed type cubic
, quadratic and additive
 functional equation in quasi-Banach spaces

\vskip.20in

{\bf M. Eshaghi Gordji } \\[2mm]

{\footnotesize Department of Mathematics,
Semnan University,\\ P. O. Box 35195-363, Semnan, Iran\\
[-1mm] e-mail: {\tt madjid.eshaghi@gmail.com}}

{\bf H. Khodaie  } \\[2mm]

{\footnotesize Department of Mathematics,
Semnan University,\\ P. O. Box 35195-363, Semnan, Iran\\
[-1mm] e-mail: {\tt khodaie.ham@gmail.com}}}
\end{center}
\vskip 5mm

 \noindent{\footnotesize{\bf Abstract.}
In this paper, we achieve the general solution and the generalized
  Hyers-Ulam-Rassias stability of the following functional equation
$$f(x+ky)+f(x-ky)=k^2f(x+y)+k^2f(x-y)+2(1-k^2)f(x)\eqno \hspace {2 cm}$$for fixed
integers $k$ with $k\neq0,\pm1$ in the quasi-Banach spaces.
 \vskip.10in
 \footnotetext { 2000 Mathematics Subject Classification: 39B82,
 39B52.}
 \footnotetext { Keywords: Hyers-Ulam-Rassias stability, Quadratic function, Cubic function.}

  \newtheorem{df}{Definition}[section]
  \newtheorem{rk}[df]{Remark}
   \newtheorem{lem}[df]{Lemma}
   \newtheorem{thm}[df]{Theorem}
   \newtheorem{pro}[df]{Proposition}
   \newtheorem{cor}[df]{Corollary}
   \newtheorem{ex}[df]{Example}

 \setcounter{section}{0}
 \numberwithin{equation}{section}

\vskip .2in

\begin{center}
\section{Introduction}
\end{center}

The stability problem of functional equations originated from a
question of Ulam [20] in 1940, concerning the stability of group
homomorphisms. Let $(G_1,.)$ be a group and let $(G_2,*)$ be a
metric group with the metric $d(.,.).$ Given $\epsilon >0$, dose
there exist a $\delta
>0$, such that if a mapping $h:G_1\longrightarrow G_2$ satisfies the
inequality $d(h(x.y),h(x)*h(y)) <\delta$ for all $x,y\in G_1$,
then there exists a homomorphism $H:G_1\longrightarrow G_2$ with
$d(h(x),H(x))<\epsilon$ for all $x\in G_1?$ In the other words,
Under what condition dose there exists a homomorphism near an
approximate homomorphism? The concept of stability for functional
equation arises when we replace the functional equation by an
inequality which acts as a perturbation of the equation. In 1941,
D. H. Hyers [9] gave a first affirmative  answer to the question
of Ulam for Banach spaces. Let $f:{E}\longrightarrow{E'}$ be a
mapping between Banach spaces such that
$$\|f(x+y)-f(x)-f(y)\|\leq \delta $$
for all $x,y\in E,$ and for some $\delta>0.$ Then there exists a
unique additive mapping $T:{E}\longrightarrow{E'}$ such that
$$\|f(x)-T(x)\|\leq \delta$$
for all $x\in E.$ Moreover if $f(tx)$ is continuous in t for each
fixed $x\in E,$ then $T$ is linear. In 1978, Th. M. Rassias [16]
provided a generalization of Hyers' Theorem which allows the Cauchy
difference to be unbounded.

The functional equation
$$f(x+y)+f(x-y)=2f(x)+2f(y),\eqno \hspace {0.5 cm}(1.1)$$
is related to symmetric bi-additive function [1,2,10,13]. It is
natural that this equation is called a quadratic functional
equation. In particular, every solution of the quadratic equation
(1.1) is said to be a quadratic function. It is well known that a
function $f$ between real vector spaces is quadratic if and only if
there exits a unique symmetric bi-additive function $B$ such that
$f(x)=B(x,x)$ for all $x$ (see [1,13]). The bi-additive function $B$
is given by
$$B(x,y)=\frac{1}{4}(f(x+y)-f(x-y)).\eqno \hspace {0.5 cm}(1.2)$$
A Hyers-Ulam-Rassias stability problem for the quadratic
functional equation (1.1) was proved by Skof for functions
$f:A\longrightarrow B$, where A is normed space and B Banach space
(see [18]). Cholewa [4] noticed that the Theorem of Skof is still
true if relevant domain $A$ is replaced an abelian group. In the
paper [5] , Czerwik proved the Hyers-Ulam-Rassias stability of the
equation (1.1). Grabiec [8] has generalized these result mentioned
above. Jun and Kim [11] introduced the following cubic functional
equation $$f(2x+y)+f(2x-y)=2f(x+y)+2f(x-y)+12f(x), \eqno(1.3)$$
and they established the general solution and the generalized
Hyers-Ulam-Rassias stability for the  functional equation (1.3).
The $f(x)=x^3$ satisfies the functional equation (1.3), which is
called a cubic functional equation. Every solution of the cubic
functional equation is said to be a cubic function.

 Jun and Kim
proved that  a function $f$ between real vector spaces X and Y is
a solution of (1.3) if and only if there exits a unique function
$C:X\times X\times X\longrightarrow Y$ such that $f(x)=C(x,x,x)$
for all $x\in X,$ and $C$ is
symmetric for each fixed one variable and is additive for fixed two variables.\\
K. Jun and H. Kim [12], have obtained the generalized Hyers-Ulam
stability for a mixed type of cubic and additive functional
equation. In addition the generalized Hyers-Ulam-Rassias for a mixed
type of quadratic and additive functional equation in quasi-Banach
spaces have been investigated by A. Najati and M. B. Moghimi [14].
Also A. Najati and G. Zamani Eskandani [15] introduced the following
functional equation
$$f(2x+y)+f(2x-y)=2f(x+y)+2f(x-y)+2f(2x)-4f(x),\eqno (1.4)$$ with
$f(0)=0$. It is easy to see that the mapping $f(x)=ax^3+bx$ is a
solution of the functional equation (1.4). They established the
general solution and the generalized Hyers-Ulam-Rassias stability
for the  functional equation (1.4) whenever $f$ is a mapping
between two quasi-Banach spaces. Now, we introduce the following
functional equation for fixed integers $k$ with $k\neq0,\pm1$:
$$f(x+ky)+f(x-ky)=k^2f(x+y)+k^2f(x-y)+2(1-k^2)f(x), \eqno (1.5)$$with
$f(0)=0$. It is easy to see that the function $f(x)=ax^3+bx^2+cx$
is a solution of the functional equation (1.5). In the present
paper we investigate the general solution of functional equation
(1.5) when $f$ is a mapping between vector spaces, and we
establish the generalized Hyers-Ulam-Rassias stability of the
functional equation (1.5) whenever $f$ is a function between two
quasi-Banach spaces.

 We recall some basic facts concerning quasi-Banach
space and some preliminary results.
\begin{df}\label{t2} (See [3, 17].) Let $X$ be a real linear space.
A quasi-norm is a real-valued function on $X$ satisfying the
following:\\ (1) $\|x\| \geq 0$ for all $x\in X$ and $\|x\|=0$ if
and only if $x=0~.$\\
(2) $\|\lambda.x\|=|\lambda|.\|x\|$ for all $\lambda \in \Bbb R$
and all $x \in X~.$\\
(3) There is a constant $M \geq 1$ such that $\|x+y\| \leq
M(\|x\|+\|y\|)$ for all $x,y \in X~.$

It follows from condition (3) that
$$\|\sum^{2n}_{i=1} x_{i}\| \leq M^n \sum^{2n}_{i=1}\|x_{i}\|,\hspace {1.5 cm}
\|\sum^{2n+1}_{i=1} x_{i}\| \leq M^{n+1}
\sum^{2n+1}_{i=1}\|x_{i}\|$$for all $~n\geq 1$ and all
$~~x_{1},x_{2},....,x_{2n+1}\in X.$
\end{df}
The pair $(X,\|.\|)$ is called a quasi-normed space if $\|.\|$ is
a quasi-norm on $X~.$ The smallest possible $M$ is called the
modulus of concavity of $\|.\|.$ A quasi-Banach space is a
complete quasi-normed space.

 A quasi-norm $\|.\|$ is called a
p-norm $(0 < p \leq 1)$ if
$$\|x+y\|^p \leq \|x\|^p+\|y\|^p,$$
for all $x,y \in X~.$ In this case, a quasi-Banach space is called
a p-Banach space.

Given a p-norm, the formula $d(x,y):=\|x-y\|^p$ gives us a
translation invariant metric on X. By the Aoki-Rolewicz Theorem [
17] (see also [3]), each quasi-norm is equivalent to some p-norm.
Since it is much easier to work with p-norms, henceforth we restrict
our attention mainly to p-norms. More over in [19], J. Tabor has
investigated a version of Hyers-Rassias-Gajda Theorem (see[6,16]) in
quasi-Banach spaces.
\\  \vskip .2in

\section{ General solution}
Throughout this section, $X$ and $Y$  will be  real vector spaces.
Before proceeding the proof of Theorem 2.3 which is the main
result in this section, we shall need the following two Lemmas.
\begin{lem}\label{t2}
If an even function $f:X\longrightarrow Y$ with $f(0)=0$ satisfies
(1.5), then $f$ is quadratic.
\end{lem}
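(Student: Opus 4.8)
The plan is to show that an even solution $f$ of (1.5) with $f(0)=0$ satisfies the quadratic equation (1.1), i.e. $f(x+y)+f(x-y)=2f(x)+2f(y)$. First I would extract a scaling relation by setting $x=0$ in (1.5); using evenness and $f(0)=0$ the right-hand side collapses and the two left-hand terms coincide, giving
$$f(ky)=k^2f(y)\qquad\text{for all }y\in X.$$
This homogeneity-of-degree-two property of the $k$-dilation is the engine of the whole argument.

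Next I would substitute $x\mapsto kx$ in (1.5). The terms $f(kx\pm ky)=f(k(x\pm y))$ and $f(kx)$ can all be simplified by the scaling relation, each picking up a factor $k^2$; after dividing through by $k^2$ (legitimate since $k\neq0$) this yields
$$f(kx+y)+f(kx-y)=f(x+y)+f(x-y)+2(k^2-1)f(x).$$
So the combination $f(kx+y)+f(kx-y)$ is expressed purely through quadratic-type data in $x$ and $y$.

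Independently, I would write (1.5) with the roles of $x$ and $y$ interchanged; evenness converts $f(y-kx)$ into $f(kx-y)$ and $f(y-x)$ into $f(x-y)$, producing a second formula
$$f(kx+y)+f(kx-y)=k^2\bigl[f(x+y)+f(x-y)\bigr]+2(1-k^2)f(y).$$
Equating the two expressions for $f(kx+y)+f(kx-y)$ and collecting terms gives $(1-k^2)\bigl[f(x+y)+f(x-y)\bigr]=2(1-k^2)\bigl[f(x)+f(y)\bigr]$. Since $k\neq\pm1$ we have $1-k^2\neq0$, and cancelling this factor delivers (1.1) exactly, so $f$ is quadratic.

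The only real subtlety — and the step I would double-check most carefully — is the bookkeeping around evenness when swapping $x$ and $y$, together with keeping the factor $(k^2-1)$ intact so that the hypothesis $k\neq\pm1$ can be invoked to cancel it; the arithmetic is otherwise routine. No completeness or topology of the target space is needed here, since this is a purely algebraic identity between vector spaces.
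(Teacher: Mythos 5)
Your proof is correct and follows essentially the same route as the paper's own argument: setting $x=0$ to get $f(ky)=k^2f(y)$, replacing $x$ by $kx$ and using this scaling to derive $f(kx+y)+f(kx-y)=f(x+y)+f(x-y)+2(k^2-1)f(x)$, then swapping $x$ and $y$ (using evenness) to get a second expression for the same left-hand side, and cancelling the nonzero factor $1-k^2$ to arrive at the quadratic equation. Your version merely makes explicit the final cancellation that the paper leaves implicit, so there is nothing to flag.
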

\begin{proof}
Setting $x=0$ in (1.5), by evenness of $f$, we obtain
$f(kx)=k^2f(x).$ Replacing $x$ by $kx$ in (1.5) and then using the
identity $f(kx)=k^2f(x)$, we lead to
$$f(kx+y)+f(kx-y)=f(x+y)+f(x-y)+2(k^2-1)f(x)\eqno \hspace {3.2cm}(2.1)$$
for all $x,y \in X.$ Interchange $x$ with $y$ in (1.5), gives
$$f(y+kx)+f(y-kx)=k^2f(y+x)+k^2f(y-x)+2(1-k^2)f(y)\eqno \hspace {3cm}(2.2)$$
for all $x,y \in X.$ By evenness of  $f$, it follows from  (2.2)
that
$$f(kx+y)+f(kx-y)=k^2f(x+y)+k^2f(x-y)+2(1-k^2)f(y)\eqno \hspace {3cm}(2.3)$$
for all $x,y \in X.$ But, $k\neq 0,\pm1$ so from (2.1) and (2.3),
we obtain
$$f(x+y)+f(x-y)=2f(x)+2f(y)\eqno \hspace {7.4cm}$$
for all $x,y \in X.$ This shows that $f$ is quadratic, which
completes the proof of Lemma.
\end{proof}

\begin{lem}\label{t2} If an odd function $f:X\longrightarrow Y$ satisfies
(1.5), then f is a cubic-additive.
\end{lem}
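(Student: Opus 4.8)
The plan is to exploit the standard splitting of an odd solution into a cubic part and an additive part. Since $f$ is odd we have $f(0)=0$, and for any odd solution of a mixed cubic--additive--quadratic equation one expects the combinations $C(x):=\frac{1}{6}\bigl(f(2x)-2f(x)\bigr)$ and $A(x):=\frac{1}{6}\bigl(8f(x)-f(2x)\bigr)$ to be, respectively, a cubic and an additive map; note that $C+A=f$ identically and that $f(x)=ax^3+cx$ splits in exactly this way. Thus the whole Lemma reduces to the two assertions that $C$ satisfies the cubic equation (1.3) and that $A$ satisfies $A(x+y)=A(x)+A(y)$. I would therefore first record, as in the even case treated above, a short list of auxiliary identities obtained from (1.5) by elementary substitutions, and then feed them into these two targets.

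To produce the auxiliary identities I would proceed as in the preceding lemma: interchanging $x$ and $y$ in (1.5) and using oddness of $f$ gives
$$f(kx+y)-f(kx-y)=k^{2}f(x+y)-k^{2}f(x-y)+2(1-k^{2})f(y),$$
while putting $y=x$ yields the one--variable relation $f((k+1)x)-f((k-1)x)=k^{2}f(2x)+2(1-k^{2})f(x)$ linking $f((k\pm1)x)$, $f(2x)$ and $f(x)$. Replacing $x$ by $kx$ in (1.5), and also substituting $y\mapsto kx$, $x\mapsto 2x$, and so on, I would generate further relations; combining them, and crucially dividing by the nonzero factors $k$ and $k^{2}-1$ that are available precisely because $k\neq0,\pm1$, the goal is to eliminate the explicit dependence on $k$ and arrive at a clean two--variable identity of the Jun--Kim / Najati--Eskandani type, e.g.\ the mixed equation (1.4), valid for all $x,y\in X$.

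Once such a $k$--free identity is in hand, verifying the cubic equation (1.3) for $C$ is the routine half: one substitutes the definition of $C$ and checks that the additive contributions cancel, leaving exactly (1.3). The main obstacle, and where most of the work lies, is establishing genuine additivity of $A$, that is $A(x+y)=A(x)+A(y)$. Additivity is a rigid condition, so the cubic contribution must be cancelled with care; I expect this to require combining several of the derived identities and again using $k^{2}-1\neq0$ to isolate the linear behaviour. A convenient intermediate step is to first prove a Jensen--type relation $A(x+y)+A(x-y)=2A(x)$ and then upgrade it to full additivity using oddness together with $A(0)=0$.

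Having shown that $C$ is cubic and $A$ is additive, the identity $f=C+A$ exhibits $f$ as a sum of a cubic function and an additive function, i.e.\ $f$ is cubic--additive, which completes the proof. As an alternative packaging, if the reduced two--variable identity turns out to be precisely (1.4), one may simply invoke the result of Najati and Zamani Eskandani [15] that every odd solution of (1.4) is cubic--additive, thereby bypassing the explicit verification of (1.3) and of additivity.
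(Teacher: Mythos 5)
Your overall route is the same as the paper's: reduce (1.5) to a $k$-free mixed cubic--additive identity and then conclude that $f$ is cubic--additive. Indeed the paper's proof ends exactly with your ``alternative packaging'': it derives
$$f(2x+y)+f(2x-y)=2f(x+y)+2f(x-y)+2(f(2x)-2f(x)),$$
i.e.\ equation (1.4), and then simply cites [15] for the conclusion. Your preferred packaging (splitting $f$ into $C(x)=\frac16(f(2x)-2f(x))$ and $A(x)=\frac16(8f(x)-f(2x))$ and verifying (1.3) and additivity separately) is also legitimate, since it is in effect what [15] proves; the two initial identities you record (interchange of $x,y$ with oddness, and $y=x$ giving $f((k+1)x)-f((k-1)x)=k^2f(2x)+2(1-k^2)f(x)$) are correct and match the paper's (2.2)--(2.4).

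However, there is a genuine gap: the entire substance of this lemma is the derivation of (1.4) from (1.5), and your proposal does not carry it out -- it only states it as a goal (``combining them \dots the goal is to eliminate the explicit dependence on $k$''). The paper needs a carefully chosen chain of substitutions for this: $x\to(k-1)x$ and $x\to(k+1)x$ (the latter combined with (2.4)), then $x\to x\pm y$, an interchange of variables, then $y\to x\pm y$, producing two independent expressions, (2.11) and (2.14), for the same quantity $f((k+1)x+ky)+f((k+1)x-ky)$; equating them makes the terms $f((k-1)x\pm y)$ and $f((k-1)x)$ cancel and leaves an identity divisible by the nonzero factor $k^2(k^2-1)$, which yields (1.4). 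The further substitutions you actually name ($x\mapsto kx$, $y\mapsto kx$, $x\mapsto 2x$ ``and so on'') are not the ones that work: in the odd case, setting $x=0$ in (1.5) gives only the trivial identity $0=0$, so there is no analogue of $f(kx)=k^2f(x)$ that made the $x\mapsto kx$ substitution productive in the even case (Lemma 2.1). Without exhibiting a concrete elimination scheme, the claim that the $k$-dependence can be removed is an expectation, not a proof, so the core step of the lemma remains unestablished in your write-up.
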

\begin{proof}
Letting $y=x$ in (1.5), we get by oddness of $f,$
$$f((k+1)x)=f((k-1)x)+k^2f(2x)+2(1-k^2)f(x)\eqno \hspace {5cm}(2.4)\hspace {.1cm}$$
for all $x,y \in X.$ Replacing $x$ by $(k-1)x$ in (1.5), gives
\begin{align*}
f&((k-1)x+ky)+f((k-1)x-ky)\\&=k^2f((k-1)x+y)+k^2f((k-1)x-y)+2(1-k^2)
f((k-1)x)\hspace {2.7cm}(2.5)\hspace {.1cm}
\end{align*}
for all $x,y \in X.$ Now, if we Replacing $x$ by $(k+1)x$ in (1.5)
and using (2.4), we see that
\begin{align*}
f&((k+1)x+ky)+f((k+1)x-ky)\\&=k^2f((k+1)x+y)+k^2f((k+1)x-y)+2(1-k^2)
f((k-1)x)\\&\hspace {.45cm}+2k^2(1-k^2)f(2x)+4(1-k^2)^2f(x)\hspace
{6.15cm}(2.6)
\end{align*}
for all $x,y \in X.$ We substitute $x=x+y$ in (1.5) and then
$x=x-y$ in (1.5) to obtain that
$$f(x+(k+1)y)+f(x-(k-1)y)=k^2f(x+2y)+2(1-k^2)f(x+y)+k^2f(x)\eqno \hspace {.2cm}(2.7)\hspace {.15cm}$$
and
$$f(x-(k+1)y)+f(x+(k-1)y)=k^2f(x-2y)+2(1-k^2)f(x-y)+k^2f(x)\eqno \hspace {1cm}(2.8)\hspace {.15cm}$$
for all $x,y \in X.$ If we subtract (2.8) from (2.7), we have
\begin{align*}
f&(x+(k+1)y)-f(x-(k+1)y)\\&=k^2f(x+2y)-k^2f(x-2y)+f(x+(k-1)y)-f(x-(k-1)y)\\&
\hspace {.45cm}+2(1-k^2)f(x+y)-2(1-k^2)f(x-y)\hspace {5.55cm}(2.9)
\end{align*}
for all $x,y \in X.$ Interchange $x$ with $y$ in (2.9) and using
oddness of $f$, we get the relation
\begin{align*}
f&((k+1)x+y)+f((k+1)x-y)\\&=k^2f(2x+y)+k^2f(2x-y)+f((k-1)x+y)+f((k-1)x-y)\\&
\hspace {.45cm}+2(1-k^2)f(x+y)+2(1-k^2)f(x-y)\hspace
{5.4cm}(2.10)\hspace {.0cm}\end{align*}for all $x,y \in X.$ It
follows  from (2.6) and (2.10) that
\begin{align*}
f&((k+1)x+ky)+f((k+1)x-ky)\\&=k^2f((k-1)x+y)+k^2f((k-1)x-y)+k^4f(2x+y)
+k^4f(2x-y)\\&
\hspace{.45cm}+2k^2(1-k^2)f(x+y)+2k^2(1-k^2)f(x-y)\\&
\hspace{.45cm}+2(1-k^2)f((k-1)x)
+2k^2(1-k^2)f(2x)+4(1-k^2)^2f(x)\hspace {2.6cm}(2.11)
\end{align*}
for all $x,y \in X.$ We substitute $y=x+y$ in (1.5) and then
$y=x-y$ in (1.5), we get by the oddness of $f,$
$$f((k+1)x+ky)-f((k-1)x+ky)=k^2f(2x+y)+k^2f(-y)+2(1-k^2)f(x)
\eqno \hspace {.7cm}(2.12)\hspace {.15cm}$$ and
$$f((k+1)x-ky)-f((k-1)x-ky)=k^2f(2x-y)+k^2f(y)+2(1-k^2)f(x)
\eqno \hspace {.9cm}(2.13)\hspace {.2cm}$$for all $x,y \in X.$
 Then, by adding (2.12) to (2.13) and then using (2.5), we lead to
\begin{align*}
f&((k+1)x+ky)+f((k+1)x-ky)\\&=k^2f((k-1)x+y)+k^2f((k-1)x-y)\\&
\hspace {.4cm}+k^2f(2x+y) +k^2f(2x-y)+4(1-k^2)f(x)\hspace
{4.8cm}(2.14)\hspace {.1cm}
\end{align*}
for all $x,y \in X.$ Finally, if we compare (2.11) with (2.14),
then we conclude that
$$f(2x+y)+f(2x-y)=2f(x+y)+2f(x-y)+2(f(2x)-2f(x))\eqno \hspace {3.3cm}$$
for all $x,y \in X.$ Hence, $f$ is cubic-additive function
(see[15]). This
 completes the proof of Lemma.
\end{proof}

\begin{thm}\label{t2} A function $f:X\rightarrow Y$ with $f(0)=0$ satisfies
(1.5) for all $x,y\in X$ if and only if there exist functions
$C:X\times X\times X\longrightarrow Y$ and $B:X\times
X\longrightarrow Y$ and $A:X\rightarrow Y,$ such that
$f(x)=C(x,x,x)+B(x,x)+A(x)$ for all $x\in X,$ where the function
 $C$ is symmetric for each fixed one variable and is additive for
fixed two variables and $B$ is symmetric bi-additive and $A$ is
additive.
\end{thm}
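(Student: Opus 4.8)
The plan is to prove both implications separately: the sufficiency (``if'') direction by direct verification that each summand satisfies (1.5), and the necessity (``only if'') direction by splitting $f$ into its even and odd parts and invoking Lemmas 2.1 and 2.2 together with the known representation theorems for quadratic, cubic, and cubic--additive maps.

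For sufficiency, I would observe that (1.5) is linear in $f$, so it suffices to check that each of $C(x,x,x)$, $B(x,x)$, and $A(x)$ satisfies (1.5); the sum then does too, and $f(0)=0$ follows because each summand vanishes at the origin. For the quadratic term, expanding $B(x\pm ky,\,x\pm ky)$ by bi-additivity and symmetry collapses both sides of (1.5) to $2B(x,x)+2k^2B(y,y)$. For the additive term, $A(x\pm ky)=A(x)\pm kA(y)$ makes both sides equal $2A(x)$. For the cubic term, expanding $C(x\pm ky,\,x\pm ky,\,x\pm ky)$ trilinearly gives $2C(x,x,x)+6k^2C(x,y,y)$ on each side. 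These are routine multilinear identities requiring no extra hypotheses on $k$.

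For necessity, I would write $f=f_e+f_o$ with $f_e(x)=\tfrac12\big(f(x)+f(-x)\big)$ and $f_o(x)=\tfrac12\big(f(x)-f(-x)\big)$. Substituting $x\to -x$ and $y\to -y$ in (1.5) shows that $x\mapsto f(-x)$ is again a solution, so by linearity both $f_e$ and $f_o$ satisfy (1.5), and both vanish at $0$. Lemma 2.1 then gives that $f_e$ is quadratic, whence the classical representation of quadratic maps (as in [1,13]) yields a unique symmetric bi-additive $B$ with $f_e(x)=B(x,x)$. Lemma 2.2 gives that $f_o$ is cubic--additive, i.e.\ a solution of (1.4), so by the general-solution result of Najati and Eskandani [15] it decomposes as $f_o=c+a$ with $c$ cubic and $a$ additive. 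Taking $A:=a$ and representing the cubic $c$ via Jun and Kim [11] as $c(x)=C(x,x,x)$ with $C$ symmetric in each variable and additive in each pair, I obtain $f(x)=C(x,x,x)+B(x,x)+A(x)$ as claimed.

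The conceptual work has essentially been discharged by Lemmas 2.1 and 2.2; the one step to handle with care is the cubic--additive splitting of $f_o$. Concretely, $c$ and $a$ are recovered by the formulas $c(x)=\tfrac16\big(f_o(2x)-2f_o(x)\big)$ and $a(x)=\tfrac16\big(8f_o(x)-f_o(2x)\big)$, exploiting that a cubic map is homogeneous of degree $3$ and an additive map of degree $1$, so that $c+a=f_o$. Verifying that these expressions genuinely define a cubic and an additive function is precisely the content established in [15], so I would cite that result rather than reprove it, which keeps the present argument short once the two Lemmas are in hand.
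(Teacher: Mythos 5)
Your proposal is correct and follows essentially the same route as the paper: decompose $f$ into even and odd parts, check both satisfy (1.5), apply Lemmas 2.1 and 2.2, then invoke the representation results of [1,13], [11], and [15], with the converse handled by direct multilinear verification. You simply supply more detail than the paper (the explicit expansions in the converse and the recovery formulas $c(x)=\tfrac16(f_o(2x)-2f_o(x))$, $a(x)=\tfrac16(8f_o(x)-f_o(2x))$), all of which checks out.
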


\begin{proof} Let $f$ with $f(0)=0$ satisfies (1.5). We decompose $f$ into the even part and odd
part by putting
$$f_e(x)=\frac{1}{2}(f(x)+f(-x)),~~\hspace {0.3 cm}f_o(x)=\frac{1}{2}(f(x)-f(-x)),$$
for all $x\in X.$ It is clear that $f(x)=f_e(x)+f_o(x)$ for all
$x\in X.$ It is easy to show that the functions $f_e$ and $f_o$
satisfy (1.5). Hence by Lemmas 2.1 and 2.2, we achieve that the
functions $f_e$ and $f_o$ are quadratic and cubic-additive,
respectively, thus there exist a symmetric bi- additive function
$B:X\times X\longrightarrow Y$ such that $f_e(x)=B(x,x)$ for all
$x\in X,$ and the function $C:X\times X\times X\longrightarrow Y$
and additive function $A:X\rightarrow Y$ such that
$f_{o}(x)=C(x,x,x)+A(x),$ for all $x\in X,$ where the function
 $C$ is symmetric for each fixed one variable and is additive for
fixed two variables. Hence, we get $f(x)=C(x,x,x)+B(x,x)+A(x),$ for
all $x\in X.$

Conversely, let $f(x)=C(x,x,x)+B(x,x)+A(x)$ for all $x\in X,$ where
the function $C$ is symmetric for each fixed one variable and is
additive for fixed two variables and $B$ is bi- additive and $A$ is
additive. By a simple computation one can show that the functions
$x\mapsto C(x,x,x)$ and $x\mapsto B(x,x)$ and $A$ satisfy the
functional equation (1.5). So the function $f$ satisfies (1.5).
\end{proof}
\section{ Stability  }
Throughout this section, assume that $X$ quasi-Banach space with
quasi-norm $\|.\|_{X}$ and that $Y$ is a p-Banach space  with
p-norm $\|.\|_{Y}.$ Let $M$ be the modulus of concavity of
$\|.\|_{Y}.$

In this section, using an idea of G$\check{a}$vruta [7] we prove the
stability of Eq.(1.5) in the spirit of Hyers, Ulam and Rassias. We
need the following Lemma in the main Theorems. Now before taking up
the main subject, given $f:X\rightarrow Y$, we define the difference
operator $D_f:X\times X \rightarrow Y$ by
$$D_{f}(x,y)=f(x+ky)+f(x-ky)-k^2f(x+y)-k^2f(x-y)-2(1-k^2)f(x)$$
for all $x,y \in X.$

\begin{lem}\label{t'2}(see [14]) Let $0<p\leq1$ and let
$x_1,x_2,\ldots,x_n$ be non-negative real numbers. Then
$$(\sum^{n}_{i=1} x_i)^p\leq \sum^{n}_{i=1} {x_i}^p. $$
\end{lem}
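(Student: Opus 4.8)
The plan is to reduce the general $n$-term inequality to the two-term subadditivity of the map $t \mapsto t^p$ on $[0,\infty)$, or alternatively to handle all $n$ at once by a single normalization argument. The second route is cleaner, so I would carry it out as follows.

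First I would dispose of the trivial case. If every $x_i$ is zero, both sides vanish and there is nothing to prove. Otherwise the sum $S := \sum_{i=1}^n x_i$ is strictly positive, and I may divide by it.

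The key elementary observation is that for $0 < p \leq 1$ and any real $t$ with $0 \leq t \leq 1$ one has $t^p \geq t$. Indeed, if $t = 0$ this is clear, and for $0 < t \leq 1$ it follows from $t^{p-1} \geq 1$, since $t \leq 1$ and $p - 1 \leq 0$. Applying this to each ratio $t_i := x_i / S \in [0,1]$ gives $(x_i/S)^p \geq x_i/S$ for every $i$. Summing over $i$ and using $\sum_{i} x_i/S = 1$ yields $\sum_{i=1}^n (x_i/S)^p \geq 1$. Multiplying both sides by $S^p > 0$ and using $(x_i/S)^p = x_i^p / S^p$ produces $\sum_{i=1}^n x_i^p \geq S^p = (\sum_{i=1}^n x_i)^p$, which is the assertion.

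There is essentially no serious obstacle here; the only point requiring care is the inequality $t^p \geq t$ on $[0,1]$, which is where the hypothesis $p \leq 1$ is used in an essential way (for $p > 1$ the inequality reverses and the lemma fails). If one prefers to avoid the normalization, the same inequality $t^p \geq t$ establishes the two-term case $(a+b)^p \leq a^p + b^p$ by taking $t = a/(a+b)$ and $t = b/(a+b)$, after which a routine induction on $n$ gives the general statement.
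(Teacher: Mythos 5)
Your proof is correct and complete. Note that the paper itself offers no argument for this lemma at all: it is stated with only the citation ``(see [14])'' to Najati and Moghimi, so there is no proof in the paper to compare against. Your normalization argument --- dividing by $S=\sum_{i=1}^n x_i$, applying $t^p\geq t$ for $t\in[0,1]$ (which is exactly where $0<p\leq 1$ enters, and correctly justified via $t^{p-1}\geq 1$), summing, and scaling back by $S^p$ --- is the standard proof of this subadditivity inequality and handles all $n$ in one stroke, with the degenerate case $S=0$ properly set aside. Your remark that the two-term case plus induction gives an alternative route is also sound, so your write-up in fact supplies a self-contained verification that the paper delegates to its reference.
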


\begin{thm}\label{t2}
Let $j\in \{-1,1\}$ be fixed and let $\varphi:X\times X\rightarrow
[0,\infty)$ be a function such that
$$\lim_{n\rightarrow\infty} k^{2nj}
\varphi(\frac{x}{k^{nj}},\frac{y}{k^{nj}})=0 \eqno\hspace
{2cm}(3.1)$$ for all $x,y\in X$ and
$$\tilde{\psi}_e(x):=\sum^{\infty}_{i=\frac{1+j}{2}} k^{2ipj}
\varphi^p(0,\frac{x}{k^{ij}})<\infty \eqno \hspace {2cm}(3.2)$$
for all $x\in X.$ Suppose that an even function $f:X\rightarrow Y$
with $f(0)=0$  satisfies the inequality
$$\| D_f(x,y)\|_Y \leq\varphi(x,y)\eqno \hspace {2cm} (3.3)$$
for all $x,y\in X.$ Then the limit
$$Q(x):=\lim_{n\rightarrow\infty} k^{2nj}
f(\frac{x}{k^{nj}}) \eqno \hspace {2cm}(3.4)$$ exists for all
$x\in X$ and $Q:X\rightarrow Y$ is a unique quadratic function
satisfying
$$\|f(x)-Q(x)\|_Y \leq\frac{M}{2k^2}[\tilde{\psi}_e(x)]^\frac{1}{p}\eqno\hspace {2cm}(3.5)$$
for all $x\in X.$
\end{thm}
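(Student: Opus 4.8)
The plan is to run the standard direct (Hyers) method, building $Q$ as the limit of the sequence $g_n(x):=k^{2nj}f(x/k^{nj})$, using the evenness and $f(0)=0$ hypotheses to produce a clean one-step control. First I would extract this one-step inequality by setting $x=0$ in (3.3): since $f$ is even and $f(0)=0$, the difference operator collapses to $D_f(0,y)=2f(ky)-2k^2f(y)$, so that
\[
\|f(ky)-k^2f(y)\|_Y\le \tfrac{1}{2}\varphi(0,y)\qquad(y\in X).
\]
Replacing $y$ by the appropriate scaling and multiplying by $k^{2nj}$ then turns this into a bound on $\|g_n(x)-g_{n+1}(x)\|_Y$ by a single $\varphi(0,\cdot)$ term; the sign of $j$ only shifts whether one scales $y$ up by $k$ or down by $1/k$, which is exactly what the lower summation index $(1+j)/2$ in (3.2) records.

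Next I would show $(g_n(x))$ is Cauchy. Working in the $p$-norm and applying Lemma 3.1, I would estimate $\|g_m(x)-g_n(x)\|_Y^p$ by the telescoping sum $\sum_i\|g_i(x)-g_{i+1}(x)\|_Y^p$ and bound each summand by the one-step inequality above. After reindexing, the resulting series is a tail of $\sum_{i=(1+j)/2}^{\infty}k^{2ipj}\varphi^p(0,x/k^{ij})=\tilde{\psi}_e(x)$, which converges by (3.2); hence the tail tends to $0$ and the sequence is Cauchy. Completeness of the $p$-Banach space $Y$ gives the limit (3.4). Taking $m=0$, letting $n\to\infty$, and using continuity of the norm yields the estimate (3.5), the modulus of concavity $M$ entering through the quasi-norm bookkeeping of these accumulated terms.

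To see that $Q$ is quadratic I would pass (3.3) to the limit. Scaling (3.3) at $(x/k^{nj},y/k^{nj})$ and multiplying by $k^{2nj}$ gives $\|D_{g_n}(x,y)\|_Y=k^{2nj}\|D_f(x/k^{nj},y/k^{nj})\|_Y\le k^{2nj}\varphi(x/k^{nj},y/k^{nj})$, and the right-hand side tends to $0$ by (3.1). Since $D_{g_n}(x,y)\to D_Q(x,y)$ by (3.4), we obtain $D_Q\equiv 0$, i.e. $Q$ satisfies (1.5). As $Q$ is even (a limit of even maps) with $Q(0)=0$, Lemma 2.1 forces $Q$ to be quadratic.

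The step I expect to be the only real obstacle is uniqueness, where one must quantify the decay of $\tilde{\psi}_e$ under scaling. If $Q'$ is another quadratic solution obeying (3.5), then both are $2$-homogeneous, so $Q(x)-Q'(x)=k^{2nj}\bigl(Q(x/k^{nj})-Q'(x/k^{nj})\bigr)$; inserting $f(x/k^{nj})$ and using the quasi-triangle inequality with modulus $M$ bounds this by $M k^{2nj}\|Q(x/k^{nj})-f(x/k^{nj})\|_Y+Mk^{2nj}\|f(x/k^{nj})-Q'(x/k^{nj})\|_Y$. Each term is controlled by (3.5) evaluated at $x/k^{nj}$, and the key computation is that $k^{2nj}[\tilde{\psi}_e(x/k^{nj})]^{1/p}$ equals the $p$-th root of the tail of $\tilde{\psi}_e(x)$ beyond index $n$, which vanishes as $n\to\infty$. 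Hence $Q=Q'$.
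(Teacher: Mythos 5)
Your proposal is correct and follows essentially the same route as the paper's proof: the same one-step inequality $\|f(kx)-k^2f(x)\|_Y\le\frac{1}{2}\varphi(0,x)$ obtained by setting $x=0$, the same telescoping $p$-norm estimate giving a Cauchy sequence and the bound (3.5), the same passage to the limit in (3.3) plus Lemma 2.1 to get quadraticity, and the same tail-decay identity $k^{2np}\tilde{\psi}_e(x/k^{n})=\sum_{i=n+1}^{\infty}k^{2ip}\varphi^p(0,x/k^{i})\to 0$ for uniqueness. The only cosmetic differences are that the paper bounds $\|Q(x)-Q'(x)\|_Y^p$ directly as a limit using the $2$-homogeneity of $Q'$ rather than splitting with the quasi-triangle inequality, and the telescoping step uses the $p$-norm property of $Y$ rather than Lemma 3.1; neither affects correctness.
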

\begin{proof}
Let $j=1.$ By putting $x=0$ in (3.3), we get
$$\|2f(ky)-2k^2f(y)\|_Y\leq\varphi(0,y) \eqno\hspace {2cm}(3.6)$$ for all $y\in X.$
If we replace $y$ in (3.6) by $x,$ and divide both sides of
$(3.6)$ by $2,$ we get
$$\|f(kx)-k^2f(x)\|_Y\leq\frac{1}{2}\varphi(0,x) \eqno\hspace {2cm}(3.7)$$ for all $x\in X.$
Let $\psi_{e}(x)=\frac{1}{2}\varphi(0,x)$ for all $x\in X,$ then
by $(3.7),$ we get
$$\|f(kx)-k^2f(x)\|_Y\leq\psi_{e}(x) \eqno\hspace {2cm}(3.8)$$ for all $x\in X.$
If we replace $x$ in (3.8) by $\frac{x}{k^{n+1}}$ and multiply
both sides of (3.8) by $k^{2n},$ then we have
$$\|k^{2(n+1)}f(\frac{x}{k^{n+1}})-k^{2n}f(\frac{x}{k^n})\|_Y\leq M k^{2n}\psi_{e}(\frac{x}{k^{n+1}}) \eqno\hspace {2cm}(3.9)$$
for all $x\in X$ and all non-negative integers $n$. Since $Y$ is
p-Banach space, then by (3.9) gives
\begin{align*}
\|k^{2(n+1)}f(\frac{x}{k^{n+1}})-k^{2m}
f(\frac{x}{k^m})\|_Y^p&\leq
\sum^{n}_{i=m}\|k^{2(i+1)}f(\frac{x}{k^{i+1}})-k^{2i}
f(\frac{x}{k^i})\|_Y^p\\
&\leq M^p \sum^{n}_{i=m} k^{2ip} {\psi_{e}}^p (\frac{x}{k^{i+1}})
\hspace {3.9cm}(3.10)
\end{align*}
for all non-negative integers $n$ and $m$ with $n\geq m$ and all
$x\in X.$ Since ${\psi_{e}}^p(x)=\frac{1}{2^p}{\varphi}^p(0,x)$
for all $x\in X,$ therefore  by (3.2)  we have
$$\sum^{\infty}_{i=1} k^{2ip} {\psi_{e}}^p(\frac{x}{k^i})<\infty \eqno\hspace
{2cm}(3.11)$$ for all $x\in X.$ Therefore we conclude from (3.10)
and (3.11) that the sequence $\{k^{2n}f(\frac{x}{k^n})\}$ is a
Cauchy sequence for all $x\in X.$ Since $Y$ is complete, the
sequence $\{k^{2n}f(\frac{x}{k^n})\}$ converges for all $x\in X.$
So one can define the function $Q:X\rightarrow Y$ by (3.4) for all
$x\in X.$ Letting $m=0$ and passing the limit $n\rightarrow\infty$
in $(3.10),$ we get
$$\|f(x)-Q(x)\|_Y^p\leq M^p\sum^{\infty}_{i=0}k^{2ip}{\psi_{e}}^p
(\frac{x}{k^{i+1}})
=\frac{M^p}{k^{2p}}\sum^{\infty}_{i=1}k^{2ip}{\psi_{e}}^p(\frac{x}{k^i})
\eqno\hspace {2cm}(3.12)$$ for all $x\in X.$ Therefore (3.5)
follows from (3.2) and (3.12). Now we show that $Q$ is quadratic.
It follows from (3.1), (3.3) and (3.4)
$$\|D_Q(x,y)\|_Y=\lim_{n\rightarrow\infty} k^{2n} \|D_f(\frac{x}{k^n},\frac{y}{k^n})\|_Y
\leq\lim_{n\rightarrow\infty} k^{2n}
\varphi(\frac{x}{k^n},\frac{y}{k^n})=0$$ for all $x,y\in X.$
Therefore the function $Q:X\rightarrow Y$ satisfies (1.5). Since $f$
is an even function, then (3.4) implies that the function $Q:X \to
Y$ is even. Therefore by Lemma 2.1, we get that the
function $Q:X \to Y$ is quadratic.\\
To prove the uniqueness of $Q,$ let $Q^{'}:X\rightarrow Y$ be
another quadratic function satisfying (3.5). Since
$$\lim_{n\rightarrow\infty} k^{2np}\sum^{\infty}_{i=1}k^{2ip}\varphi^p(0,\frac{x}{k^{i+n}})
=\lim_{n\rightarrow\infty}
\sum^{\infty}_{i={n+1}}k^{2ip}\varphi^p(0,\frac{x}{k^i})=0$$ for
all $x\in X,$ then $$\lim_{n\rightarrow\infty} k^{2np}
\tilde{\psi}_e(\frac{x}{k^n})=0 \eqno\hspace {2.5cm}$$ for all
$x\in X.$ Therefore it follows from (3.5) and the last equation
that
$$\|Q(x)-Q^{'}(x)\|_Y^p=\lim_{n\rightarrow\infty} k^{2np}
\|f(\frac{x}{k^n})-Q^{'}(\frac{x}{k^n})\|_Y^p\\
\leq\frac{M^p}{{2k^2}^p}\lim_{n\rightarrow\infty}
k^{2np}\tilde{\psi}_e(\frac{x}{k^n})=0$$ for all $x\in X.$ Hence
$Q=Q^{'}.$\\  For $j=-1$, we can prove the Theorem by a similar
technique.
\end{proof}
\begin{cor}\label{t2}
Let $\theta, r, s$ be non-negative real numbers such that $r,s> 2$
or $0\leq r,s<2$. Suppose that an even function $f:X\rightarrow Y$
with $f(0)=0$ satisfies the inequality
$$\| D_f(x,y)\|_Y \leq\theta(\|x\|_X^r+\|y\|_X^s) ,\eqno \hspace {3cm} (3.13)$$
for all $x,y\in X.$ Then there exists a unique quadratic function
$Q:X\rightarrow Y$ satisfies
$$\|f(x)-Q(x)\|_Y \leq\frac{M\theta}{2}~\textbf{(}\frac{1}{|k^{2p}-k^{sp}|}~\|x\|_X^{sp}\textbf{)
}^{\frac{1}{p}}\eqno\hspace {2cm}$$
for all $x\in X.$
\end{cor}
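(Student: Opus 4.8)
The plan is to specialize the general stability Theorem 3.2 to the power-type control function $\varphi(x,y)=\theta(\|x\|_X^r+\|y\|_X^s)$ and verify that its two hypotheses hold, then simplify the resulting bound. First I would treat the two cases $r,s>2$ and $0\le r,s<2$ together by an appropriate choice of $j\in\{-1,1\}$: take $j=1$ (so we contract toward $0$, replacing $x$ by $x/k^{nj}$) when $0\le r,s<2$, and $j=-1$ (so we dilate) when $r,s>2$. With this choice the factor $k^{2j}$ multiplied against the shrinking argument $\varphi$ decays geometrically. Concretely, I would check hypothesis (3.1): since $\varphi(x/k^{nj},y/k^{nj})=\theta(k^{-nrj}\|x\|_X^r+k^{-nsj}\|y\|_X^s)$, multiplying by $k^{2nj}$ gives terms behaving like $k^{n(2-r)j}$ and $k^{n(2-s)j}$, each of which tends to $0$ under the stated sign conditions on $r,s$ relative to $2$ paired with the sign of $j$.

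Next I would verify hypothesis (3.2). Because $\varphi(0,x/k^{ij})=\theta\,k^{-isj}\|x\|_X^s$ (the first slot vanishes), the series $\tilde\psi_e(x)=\sum_{i=(1+j)/2}^{\infty} k^{2ipj}\varphi^p(0,x/k^{ij})$ becomes a constant multiple of $\|x\|_X^{sp}$ times the geometric series $\sum_i k^{ip(2-s)j}$. This converges exactly when $(2-s)j<0$, i.e. under the same case split, and I would sum it in closed form. The geometric sum produces the factor $\dfrac{1}{|k^{2p}-k^{sp}|}$ appearing in the conclusion; the absolute value and the single power $k^{sp}$ in the numerator conveniently absorb both the $j=1$ and $j=-1$ computations, with the starting index $(1+j)/2$ accounting for whether the $i=0$ term is included.

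Having established both hypotheses, Theorem 3.2 immediately yields a unique quadratic $Q$ with $\|f(x)-Q(x)\|_Y\le \frac{M}{2k^2}[\tilde\psi_e(x)]^{1/p}$. I would then substitute the closed-form value of $\tilde\psi_e(x)=\theta^p\|x\|_X^{sp}\cdot(\text{geometric sum})$ into this bound, pull the $p$-th root through, and collect constants to recover the stated estimate $\|f(x)-Q(x)\|_Y\le \frac{M\theta}{2}\bigl(\frac{1}{|k^{2p}-k^{sp}|}\|x\|_X^{sp}\bigr)^{1/p}$. The only mild subtlety — and the step I would watch most carefully — is bookkeeping the constant $1/(2k^2)$ from (3.5) against the $k^{2p}$ that emerges from the lower summation index: one must confirm these combine to leave precisely $|k^{2p}-k^{sp}|^{-1}$ inside the root rather than an extra stray power of $k$. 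This is purely a geometric-series and index-alignment check, so no genuine obstacle is expected; the real content is entirely carried by Theorem 3.2.
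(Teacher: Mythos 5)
Your overall route is exactly the paper's: its proof of this corollary is a one-line appeal to Theorem 3.2 with $\varphi(x,y):=\theta(\|x\|_X^r+\|y\|_X^s)$, and you correctly identify that the content lies in checking hypotheses (3.1)--(3.2) and summing a geometric series. However, your case assignment for $j$ is inverted, and as written both verifications fail. With $j=1$ the theorem builds $Q(x)=\lim_n k^{2n}f(x/k^n)$, so the relevant factors in (3.1) are $|k|^{n(2-r)}$ and $|k|^{n(2-s)}$; since $|k|\ge 2$, these tend to $0$ precisely when $r,s>2$, not when $0\le r,s<2$. Likewise the series (3.2) for $j=1$ becomes $\theta^p\|x\|_X^{sp}\sum_{i\ge 1}|k|^{ip(2-s)}$, which converges only for $s>2$; under your pairing ($j=1$ with $s<2$) it diverges and Theorem 3.2 cannot be invoked. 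The correct pairing is $j=1$ for the case $r,s>2$ and $j=-1$ for $0\le r,s<2$ --- the opposite of what you wrote. Indeed your own stated criterion $(2-s)j<0$ already contradicts your assignment: for $j=1$ it demands $s>2$.

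Once the two cases are swapped, the rest of your outline is sound: $\varphi(0,x/k^{ij})=\theta\,|k|^{-isj}\|x\|_X^s$, the geometric sum for $j=1$ (starting at $i=1$) equals $|k|^{p(2-s)}/\bigl(1-|k|^{p(2-s)}\bigr)=1/\bigl(|k|^{p(s-2)}-1\bigr)$, while for $j=-1$ (starting at $i=0$) it equals $1/\bigl(1-|k|^{p(s-2)}\bigr)$; in either case, absorbing the prefactor $\frac{M}{2k^2}$ from (3.5) into the $p$-th root (i.e.\ multiplying the denominator by $k^{2p}$) produces exactly $\frac{M\theta}{2}\bigl(\frac{1}{|k^{2p}-k^{sp}|}\|x\|_X^{sp}\bigr)^{1/p}$, with the absolute value covering both cases. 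So the defect is a single, concretely identifiable sign error in the choice of $j$ rather than a missing idea, but the proof as submitted does not go through until it is corrected.
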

\begin{proof}
It follows from Theorem 3.2 by putting
$\varphi(x,y):=\theta(\|x\|_X^r+\|y\|_X^s)$ for all $x,y\in X.$
\end{proof}
\begin{thm}\label{t2}
Let $j\in \{-1,1\}$ be fixed and let $\varphi_a:X\times
X\rightarrow [0,\infty)$ be a function such that
$$\lim_{n\rightarrow\infty} 2^{nj}
\varphi_a(\frac{x}{2^{nj}},\frac{y}{2^{nj}})=0 \eqno\hspace
{2cm}(3.14)$$ for all $x,y\in X$ and
$$\sum^{\infty}_{i=\frac{1+j}{2}} 2^{ipj}
{\varphi_a}^p(\frac{x}{2^{ij}},\frac{y}{2^{ij}})<\infty \eqno
\hspace {2cm}(3.15)$$ for all $x\in X$ and for all $y\in
\{x,2x,3x\}.$ Suppose that an odd function $f:X\rightarrow Y$
satisfies the inequality
$$\| D_f(x,y)\|_Y \leq\varphi_a(x,y)\eqno \hspace {4.5cm} (3.16)$$
for all $x,y\in X.$ Then the limit
$$A(x):=\lim_{n\rightarrow\infty} 2^{nj}
[f(\frac{x}{2^{nj-1}})-8f(\frac{x}{2^{nj}})] \eqno \hspace
{2.4cm}(3.17)$$ exists for all $x\in X$ and $A:X\rightarrow Y$ is
a unique additive function satisfying
$$\|f(2x)-8f(x)-A(x)\|_Y \leq\frac{M^5}{2}[\widetilde{\psi}_a(x)]^\frac{1}{p}\eqno\hspace {2cm}(3.18)$$
for all $x\in X,$ where
\begin{align*}
\widetilde{\psi}_a(x):=\sum^{\infty}_{i=\frac{1+j}{2}}
&2^{ipj}~\textbf{\{}\frac{1}{k^{2p}(1-k^2)^p}
~[~(5-4k^2)^p{\varphi_a}^p(\frac{x}{2^{ij}},\frac{x}{2^{ij}})+k^{2p}
{\varphi_a}^p(\frac{2x}{2^{ij}},\frac{2x}{2^{ij}})\\&+(2k^2)^p
{\varphi_a}^p(\frac{2x}{2^{ij}},\frac{x}{2^{ij}})+
{\varphi_a}^p(\frac{x}{2^{ij}},\frac{3x}{2^{ij}})+(4-2k^2)^p
{\varphi_a}^p(\frac{x}{2^{ij}},\frac{2x}{2^{ij}})\\&+2^p
{\varphi_a}^p(\frac{(1+k)x}{2^{ij}},\frac{x}{2^{ij}}) +2^p
{\varphi_a}^p(\frac{(1-k)x}{2^{ij}},\frac{x}{2^{ij}})
\\&+{\varphi_a}^p(\frac{(1+2k)x}{2^{ij}},\frac{x}{2^{ij}}) +
{\varphi_a}^p(\frac{(1-2k)x}{2^{ij}},\frac{x}{2^{ij}})~]~
\textbf{\}}. \hspace{3cm}(3.19)\end{align*}
\end{thm}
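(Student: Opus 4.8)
The plan is to reduce the whole statement to a Hyers-type iteration of the auxiliary map $g(x):=f(2x)-8f(x)$, which for an exact odd solution of (1.5) is exactly the additive part (the cubic part $C(x,x,x)$, being homogeneous of degree three, is annihilated by the combination $f(2x)-8f(x)$). I first treat $j=1$, the case $j=-1$ being identical after replacing $x/2^{n}$ by $2^{n}x$. Since $x/2^{nj-1}=2\,(x/2^{nj})$, one has $f(x/2^{nj-1})-8f(x/2^{nj})=g(x/2^{nj})$, so the limit (3.17) is simply $A(x)=\lim_{n}2^{n}g(x/2^{n})$ and the estimate (3.18) reads $\|g(x)-A(x)\|_Y\le\frac{M^{5}}{2}[\widetilde\psi_a(x)]^{1/p}$.

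The heart of the argument is a quantitative version of Lemma 2.2. I would rerun the chain of substitutions in the proof of Lemma 2.2, but applied to the inequality (3.16) rather than to an exact identity, feeding (3.16) precisely the nine pairs that appear in (3.19), namely $(x,x)$, $(2x,2x)$, $(2x,x)$, $(x,3x)$, $(x,2x)$, $((1\pm k)x,x)$ and $((1\pm 2k)x,x)$. Combining these nine estimates through the quasi-triangle inequality, using $\|\sum_{i=1}^{9}z_i\|_Y\le M^{5}\sum_{i=1}^{9}\|z_i\|_Y$ (Definition~\ref{t2}, since $9=2\cdot 4+1$) together with Lemma 3.1 to push $p$-th powers through the sum, the exact cancellations of Lemma 2.2 become a bound on the defect of equation (1.4); after dividing by the leading coefficient $k^{2}(1-k^{2})$ I expect the master inequality
$$\|f(4x)-10f(2x)+16f(x)\|_Y^{p}\le\frac{M^{5p}}{k^{2p}(1-k^{2})^{p}}\Big[(5-4k^{2})^{p}\varphi_a^{p}(x,x)+\cdots\Big],$$
where the bracket is the one displayed in (3.19) with each $x/2^{ij}$ replaced by $x$; note the left side equals $\|g(2x)-2g(x)\|_Y^{p}$. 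Producing the exact nine coefficients $(5-4k^{2}),k^{2},2k^{2},1,(4-2k^{2}),2,2,1,1$ and the normalizing factor $k^{2}(1-k^{2})$ is the delicate bookkeeping, and this step is the main obstacle.

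Granting the master inequality, the remainder follows the template of Theorem 3.2. Replacing $x$ by $x/2^{i+1}$ and scaling by $2^{i}$ turns it into a bound on $\|2^{i}g(x/2^{i})-2^{i+1}g(x/2^{i+1})\|_Y$; summing the $p$-th powers with Lemma 3.1 and invoking (3.15) (which guarantees $\widetilde\psi_a(x)<\infty$) shows that $\{2^{n}g(x/2^{n})\}$ is Cauchy, hence convergent in the complete space $Y$, and this defines $A$ by (3.17). Setting the lower summation index to $0$ and letting $n\to\infty$ yields (3.18); the factor $\tfrac12$ arises from the index shift $i+1\mapsto i$ in the series, which converts the $M^{5p}$ into $\frac{M^{5p}}{2^{p}}$ and reproduces $\widetilde\psi_a(x)$ exactly.

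Finally I would verify additivity and uniqueness of $A$. From $D_A(x,y)=\lim_{n}2^{n}[D_f(x/2^{n-1},y/2^{n-1})-8D_f(x/2^{n},y/2^{n})]$ together with (3.16) and (3.14) I get $\|D_A(x,y)\|_Y=0$, so $A$ satisfies (1.5), while as a limit of odd combinations $A$ is odd and, directly from (3.17), $A(2x)=2A(x)$. By Lemma 2.2 and the decomposition in Theorem 2.3, $A=C(\cdot,\cdot,\cdot)+\alpha$ with $\alpha$ additive and $C(2x,2x,2x)=8C(x,x,x)$; the relation $A(2x)=2A(x)$ then forces $C\equiv 0$, so $A=\alpha$ is additive. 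For uniqueness, if $A'$ is another additive map satisfying (3.18), additivity gives $A(x)-A'(x)=2^{n}[A(x/2^{n})-A'(x/2^{n})]$, and estimating through $g(x/2^{n})$ via (3.18) and Lemma 3.1 bounds $\|A(x)-A'(x)\|_Y^{p}$ by a constant multiple of $2^{np}\widetilde\psi_a(x/2^{n})$, which equals the tail $\sum_{l>n}2^{lp}[\cdots]$ of the convergent series defining $\widetilde\psi_a$ and hence tends to $0$; therefore $A=A'$.
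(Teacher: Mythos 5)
Your outline coincides with the paper's own proof: the same auxiliary function $g(x)=f(2x)-8f(x)$, the same nine substitutions into (3.16), the same iteration of $2^{n}g(x/2^{n})$ in the $p$-norm, and the same additivity and uniqueness arguments. (Your additivity step is in fact a corrected version of the paper's: its sentence ``the function $x\rightsquigarrow A(2x)-2A(x)$ is additive'' is garbled, since that function is cubic; your reading --- decompose $A$ via Lemma 2.2/Theorem 2.3 into cubic plus additive and kill the cubic part with $A(2x)=2A(x)$ --- is the right one. Your uniqueness argument via the tail of the convergent series is also exactly the paper's.)

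The genuine gap is precisely the step you flag as ``the main obstacle'': the master inequality for $\|f(4x)-10f(2x)+16f(x)\|_Y=\|g(2x)-2g(x)\|_Y$ is asserted, not derived, and it carries all the quantitative content of the theorem --- everything after it is routine. It is not enough to name the nine pairs; one must exhibit an explicit linear combination of the nine defects in which all auxiliary values $f((2\pm k)x)$, $f((1\pm k)x)$, $f((1\pm2k)x)$, $f((1\pm3k)x)$, $f(2(1\pm k)x)$, $f(kx)$, $f(2kx)$ cancel. The paper does this in two stages ((3.29)--(3.32)), which amount to the following exact identities, valid for odd $f$:
\begin{align*}
k^2(1-k^2)\bigl[f(3x)-4f(2x)+5f(x)\bigr]={}&D_f((1{+}k)x,x)+D_f((1{-}k)x,x)+k^2D_f(2x,x)\\
&{}+2(1-k^2)D_f(x,x)-D_f(x,2x),\\
k^2(1-k^2)\bigl[f(4x)-2f(3x)-2f(2x)+6f(x)\bigr]={}&D_f((1{+}2k)x,x)+D_f((1{-}2k)x,x)+k^2D_f(2x,2x)\\
&{}+2(1-k^2)D_f(x,2x)-D_f(x,3x)-D_f(x,x).
\end{align*}
Taking quasi-norms (five, resp.\ six, summands cost $M^{3}$ by Definition 1.1) and then adding the second estimate to twice the first, using $\bigl[f(4x)-2f(3x)-2f(2x)+6f(x)\bigr]+2\bigl[f(3x)-4f(2x)+5f(x)\bigr]=f(4x)-10f(2x)+16f(x)$, yields your master inequality with a factor $M^{4}\le M^{5}$, confirming your expectation. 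One bookkeeping caveat, which the paper itself fumbles and which your guessed coefficients inherit: since $k^2\ge 4$, the scalars $1-k^2$, $5-4k^2$, $4-2k^2$ are negative, and the triangle inequality produces their absolute values; carried out consistently, the weight of $\varphi_a(x,x)$, for instance, comes out as $4k^2-3$ rather than $5-4k^2$. This alters only the explicit constants in (3.18)/(3.19), not the hypotheses (3.14)--(3.15) nor any subsequent step.
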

\begin{proof}
Let $j=1.$ By replacing $y$ by $x$ in (3.16), we have
$$\|f((1+k)x)+f((1-k)x)-k^2f(2x)-2(1-k^2)f(x)\|\leq
\varphi_a(x,x) \eqno \hspace {2.5cm}(3.20)$$
 for all $x\in X.$ It follows from (3.20) that
$$\|f(2(1+k)x)+f(2(1-k)x)-k^2f(4x)-2(1-k^2)f(2x)\|\leq
\varphi_a(2x,2x) \eqno \hspace {2.3cm}(3.21)$$
 for all $x\in X.$ Replacing $x$ and $y$ by $2x$ and $x$ in
 (3.16), respectively, we get
$$\|f((2+k)x)+f((2-k)x)-k^2f(3x)-2(1-k^2)f(2x)-k^2f(x)\|\leq
\varphi_a(2x,x) \eqno \hspace {1.1cm}(3.22)$$ for all $x\in X.$
Letting $y$ by $2x$ in (3.16) gives
$$\|f((1+2k)x)+f((1-2k)x)-k^2f(3x)-k^2f(-x)-2(1-k^2)f(x)\|\leq
\varphi_a(x,2x) \eqno \hspace {1cm}(3.23)$$
 for all $x\in X.$
putting $y$ by $3x$ in (3.16), we obtain
$$\|f((1+3k)x)+f((1-3k)x)-k^2f(4x)-k^2f(-2x)-2(1-k^2)f(x)\|\leq
\varphi_a(x,3x) \eqno \hspace {1cm}(3.24)$$
 for all $x\in X.$ Replacing $x$ and $y$ by $(1+k) x$ and $x$ in
 (3.16), respectively, we get
\begin{align*}\|f((1+2k)x)+f(x)-k^2f((2+k)x)-k^2f(kx)-2(1-k^2)&f((1+k)x)\|\\&\leq
\varphi_a((1+k)x,x) \hspace {1cm}(3.25) \end{align*}
 for all $x\in X.$
 Replacing $x$ and $y$ by $(1-k)x$ and $x$ in
 (3.16), respectively, one gets
\begin{align*}\|f((1-2k)x)+f(x)-k^2f((2-k)x)-k^2f(-kx)-2(1-k^2&)f((1-k)x)\|\\&\leq
\varphi_a((1-k)x,x) \hspace {0.8cm}(3.26) \end{align*}
 for all $x\in X.$
 Replacing $x$ and $y$ by $(1+2k)x$ and $x$ in
 (3.16), respectively, we obtain
\begin{align*}\|f((1+3k)x)+f((1+k)x)-k^2f(2(1+k)x)-k^2f(2kx)-&2(1-k^2)f((1+2k)x)\|\\&\leq
\varphi_a((1+2k)x,x) \hspace {0.9cm}(3.27) \end{align*}
 for all $x\in X.$
 Replacing $x$ and $y$ by $(1-2k)x$ and $x$ in
 (3.16), respectively, we have
\begin{align*}\|f((1-3k)x)+f((1-k)x)-k^2f(2(1-k)x)-k^2f(-2kx&)-2(1-k^2)f((1-2k)x)\|\\&\leq
\varphi_a((1-2k)x,x) \hspace {1cm}(3.28) \end{align*}
 for all $x\in X.$ It follows from (3.25), (3.26) and
 oddness $f$ that
\begin{align*}\|&f((1+2k)x)+f((1-2k)x)+2f(x)-k^2f((2+k)x)-k^2f((2-k)x)
\\&-2(1-k^2)f((1+k)x)-2(1-k^2)f((1-k)x)\|\\& \hspace {5cm}\leq
M(\varphi_a((1+k)x,x) +\varphi_a((1-k)x,x)) \hspace {1cm}(3.29)
\end{align*}
 for all $x\in X.$ Now, from (3.20), (3.22), (3.23) and (3.29), we conclude that
\begin{align*}\|f(3x)-4f(2x)+5f(x)\|\leq
&\frac{M^3}{k^2(1-k^2)}~[2(1-k^2)\varphi_a(x,x)+k^2\varphi_a(2x,x)\\&+\varphi_a(x,2x)
+\varphi_a((1+k)x,x)+\varphi_a((1-k)x,x)~] \hspace {1.3cm}(3.30)
\end{align*}
 for all $x\in X.$ On the other hand it follows from (3.27), (3.28) and
 oddness $f$ that
\begin{align*}\|&f((1+3k)x)+f((1-3k)x)+f((1+k)x)+f((1-k)x)-k^2f(2(1+k)x)\\&-k^2f(2(1-k)x)
-2(1-k^2)f((1+2k)x)-2(1-k^2)f((1-2k)x)\|\\& \hspace {5cm}\leq
M(\varphi_a((1+2k)x,x) +\varphi_a((1-2k)x,x)) \hspace
{1.1cm}(3.31)
\end{align*}
 for all $x\in X.$ Also, from (3.20), (3.21), (3.23), (3.24) and (3.31), we
 lead to
\begin{align*}\|f&(4x)-2f(3x)-2f(2x)+6f(x)\|\leq
\frac{M^3}{k^2(1-k^2)}~[\varphi_a(x,x)+k^2\varphi_a(2x,2x)\\&+2(1-k^2)\varphi_a(x,2x)
+\varphi_a(x,3x)+\varphi_a((1+2k)x,x)+\varphi_a((1-2k)x,x)~]
\hspace {1.6cm}(3.32)
\end{align*}
 for all $x\in X.$ Finally, by using (3.30) and (3.32), we obtain
 that
\begin{align*}\|f&(4x)-10f(2x)+16f(x)\|\leq
\frac{M^5}{k^2(1-k^2)}~[(5-4k^2)\varphi_a(x,x)+k^2\varphi_a(2x,2x)\\&+2k^2\varphi_a(2x,x)
+(4-2k^2)\varphi_a(x,2x)+\varphi_a(x,3x)
+2\varphi_a((1+k)x,x)\\&+2\varphi_a((1-k)x,x)
+\varphi_a((1+2k)x,x)+\varphi_a((1-2k)x,x)~] \hspace {3.6cm}(3.33)
\end{align*}
 for all $x\in X,$ and let
\begin{align*}\psi_{a}(x)=&
\frac{1}{k^2(1-k^2)}~[(5-4k^2)\varphi_a(x,x)+k^2\varphi_a(2x,2x)\\&+2k^2\varphi_a(2x,x)
+(4-2k^2)\varphi_a(x,2x)+\varphi_a(x,3x)
+2\varphi_a((1+k)x,x)\\&+2\varphi_a((1-k)x,x)
+\varphi_a((1+2k)x,x)+\varphi_a((1-2k)x,x)~]  \hspace
{2.8cm}(3.34)\end{align*} for all $x \in X.$ Therefore $(3.33)$
means that
$$\|f(4x)-10f(2x)+16f(x)\|\leq M^5\psi_{a}(x) \eqno \hspace {6cm}(3.35)$$
for all $x \in X.$ Letting $g:X \to Y$ be a function defined by
$g(x):=f(2x)-8f(x)$ then, we conclude that
$$\|g(2x)-2g(x)\|\leq M^5\psi_{a}(x) \eqno\hspace {7.8cm}(3.36)$$ for all $x \in X.$
If we replace $x$ in (3.36) by $\frac{x}{2^{n+1}}$ and multiply both
sides of (3.36) by $2^n,$ we get
$$\|2^{n+1}g(\frac{x}{2^{n+1}})-2^ng(\frac{x}{2^n})\|_Y\leq M^5 2^n \psi_{a}(\frac{x}{2^{n+1}})
 \eqno\hspace {5.1cm}(3.37)$$
for all $x\in X$ and all non-negative integers $n$. Since $Y$ is
p-Banach space, therefore by inequality (3.37), gives
\begin{align*}
\|2^{n+1}g(\frac{x}{2^{n+1}})-2^m g(\frac{x}{2^m})\|_Y^p&\leq
\sum^{n}_{i=m}\|2^{i+1}g(\frac{x}{2^{i+1}})-2^i
g(\frac{x}{2^i})\|_Y^p\\
&\leq M^{5p} \sum^{n}_{i=m} 2^{ip} {\psi_{a}}^p
(\frac{x}{2^{i+1}}) \hspace {4.4cm}(3.38)
\end{align*}
for all non-negative integers $n$ and $m$ with $n\geq m$ and all
$x\in X.$ Since $0< p\leq1$, then by Lemma 3.1, we get from (3.34),
\begin{align*}{\psi_{a}}^p(x)\leq&
\frac{1}{k^{2p}(1-k^2)^p}~[(5-4k^2)^p{\varphi_a}^p(x,x)+k^{2p}{\varphi_a}^p(2x,2x)
\\&+(2k^2)^p{\varphi_a}^p(2x,x)
+(4-2k^2)^p{\varphi_a}^p(x,2x)+{\varphi_a}^p(x,3x)
+2^p{\varphi_a}^p((1+k)x,x)\\&+2^p{\varphi_a}^p((1-k)x,x)
+{\varphi_a}^p((1+2k)x,x)+{\varphi_a}^p((1-2k)x,x)~] , \hspace
{1.75cm}(3.39)\end{align*}
 for all $x\in X.$ Therefore  it follows from (3.15) and (3.39) that
$$\sum^{\infty}_{i=1} 2^{ip} {\psi_{a}}^p(\frac{x}{2^i})<\infty\hspace
{9.27cm}(3.40)\hspace {.2cm}$$ for all $x\in X.$ Therefore we
conclude from (3.38) and (3.40) that the sequence
$\{2^{n}g(\frac{x}{2^n})\}$ is a Cauchy sequence for all $x\in X.$
Since $Y$ is complete, the sequence $\{2^{n}g(\frac{x}{2^n})\}$
converges for all $x\in X.$ So one can define the mapping
$A:X\rightarrow Y$ by $$A(x)=\lim_{n \to
\infty}2^ng(\frac{x}{2^n}) \hspace {9.1cm}(3.41)\hspace {.2cm}$$
for all $x\in X.$ Letting $m=0$ and passing the limit
$n\rightarrow\infty$ in (3.38), we get
$$\|g(x)-A(x)\|_Y^p\leq M^{5p}\sum^{\infty}_{i=0}2^{ip}{\psi_{a}}^p
(\frac{x}{2^{i+1}})
=\frac{M^{5p}}{2^p}\sum^{\infty}_{i=1}2^{ip}{\psi_{a}}^p(\frac{x}{2^i})
\hspace {3.25cm}(3.42)\hspace {.2cm}$$ for all $x\in X.$ Therefore
(3.18) follows from (3.15) and (3.42). Now we show that $A$ is
additive. It follows from (3.14), (3.37) and (3.41) that
\begin{align*}
\|A(2x)-2A(x)\|_Y &=\lim_{n \to \infty}\|2^n
g(\frac{x}{2^{n-1}})-2^{n+1}g(\frac{x}{2^n})\|_Y\\
&= 2 \lim_{n \to \infty} \|2^{n-1}g(\frac{x}{2^{n-1}})-2^n
g(\frac{x}{2^n})\|_Y \\
& \leq M^5 \lim_{n \to \infty} 2^n \psi_{a}(\frac{x}{2^n})=0
\hspace {6cm}\end{align*} for all $x \in X.$ So
$$A(2x)=2A(x)\hspace {9.8cm}(3.43)$$ for all $x \in X.$ On the
other hand it follows from (3.14), (3.16) and (3.17) that
\begin{align*}
\|D_A(x,y)\|_Y&=\lim_{n \to \infty} 2^n
\|D_g(\frac{x}{2^{n}},\frac{y}{2^{n}})\|_Y=\lim_{n \to \infty} 2^n
\|D_f(\frac{x}{2^{n-1}},\frac{y}{2^{n-1}})-8
D_f(\frac{x}{2^n},\frac{y}{2^n})\|_Y\\
&\leq M^5\lim_{n \to \infty}
2^n\{\|D_f(\frac{x}{2^{n-1}},\frac{y}{2^{n-1}})\|_Y+8
\|D_f(\frac{x}{2^n},\frac{y}{2^n})\|_Y\}\\
&\leq M^5\lim_{n \to \infty}
2^n\{\varphi_a(\frac{x}{2^{n-1}},\frac{y}{2^{n-1}})+8
\varphi_a(\frac{x}{2^n},\frac{y}{2^n})\}=0
\end{align*}
for all $x,y \in X.$ Hence the function $A$ satisfies $(1.5).$ By
Lemma 2.2, the function $x \rightsquigarrow A(2x)-2A(x)$ is
additive. Hence, (3.43) implies that the function $A$ is additive.\\
To prove the uniqueness property of $A,$ let $A^{'}:X \to Y$ be
another additive function satisfying (3.18). Since $$\lim_{n \to
\infty}2^{np}\sum_{i=1}^{\infty}2^{ip}{\varphi_a}^p(\frac{x}{2^{n+i}},\frac{x}{2^{n+i}})
=\lim_{n\to\infty}\sum_{i=n+1}^{\infty}2^{ip}{\varphi_a}^p
(\frac{x}{2^i},\frac{x}{2^i})=0\eqno\hspace{2cm}$$ for all $x\in X$
and for all $y\in \{x, 2x, 3x\},$ then
$$\lim_{n \to \infty}2^{np}\widetilde{\psi}_a(\frac{x}{2^n})=0\hspace{9cm}(3.44)$$
for all $x \in X$. It follows from (3.18) and (3.44) that
$$\|A(x)-A^{'}(x)\|_Y=\lim_{n \to \infty}2^{np}{\|g(\frac{x}{2^n})-A^{'}(\frac{x}{2^n})\|_Y}^p
\leq \frac{M^{5p}}{2^p}\lim_{n \to
\infty}2^{np}\widetilde{\psi}_a(\frac{x}{2^n})=0$$
for all $x \in X.$ So $A=A^{'}.$\\
For $j=-1$, we can prove the Theorem by a similar technique.
\end{proof}

\begin{cor}\label{t2}
Let $\theta, r, s$ be non-negative real numbers such that $r,s> 1$
or $0\leq r,s<1$. Suppose that an odd function $f:X\rightarrow Y$
satisfies the inequality
$$\| D_f(x,y)\|_Y \leq\left\{%
\begin{array}{ll}
    \theta, & \hbox{r =s =0;} \\
    \theta\|x\|_X^r, & \hbox{r $>$ 0, s=0;} \\
    \theta\|y\|_X^s, & \hbox{r=0, s $>$ 0;} \\
    \theta(\|x\|_X^r+\|y\|_X^s), & \hbox{r, s $>$ 0.} \\
\end{array}%
\right.\eqno\hspace{3.5cm}(3.45)$$ for all $x, y \in X.$ Then
there exists a unique additive function $A:X\rightarrow Y$
satisfying
$$\| f(2x)-8f(x)-A(x)\|_Y \leq \frac{M^5 \theta}{k^2(1-k^2)}
\left\{%
\begin{array}{ll}
    \delta_{a}, & \hbox{r =s =0;} \\
    \alpha_{a}~\|x\|_X^r, & \hbox{r $>$ 0, s=0;} \\
    \beta_{a}~\|x\|_X^s, & \hbox{r=0, s $>$ 0;} \\
    (\alpha_{a}^p~\|x\|_X^{rp}+\beta_{a}^p~\|x\|_X^{sp})^{\frac{1}{p}}, & \hbox{r, s $>$ 0.} \\
\end{array}%
\right.\eqno\hspace{4cm}$$ for all $x \in X,$ where
$$\delta_{a}=\textbf{\{}~\frac{1}{2^p-1}~[(5-4k^2)^p+(4-2k^2)^p+k^{2p}(2^p+1)
+2^{p+1}+3]~\textbf{\}}^\frac{1}{p},\eqno\hspace{2cm}$$
\begin{align*}\alpha_{a}=\textbf{\{}~\frac{1}{|2^p-2^{rp}|}~[(5-4k^2)^p+(4-2k^2)^p+&(1+2k)^{rp}
+(1-2k)^{rp}+2^p(1+k)^{rp}\\&+2^p(1-k)^{rp}+2^{rp}k^{2p}(2^p+1)+1]~\textbf{\}}^\frac{1}{p}
,\end{align*}

\begin{align*}\beta_{a}=\textbf{\{}~\frac{1}{|2^p-2^{sp}|}~[(5-4k^2)^p+2^{sp}(4-2k^2)^p+
k^{2p}(2^{sp}+2^p)+3^{sp}+2^{p+1}+2]~\textbf{\}}^\frac{1}{p}
.\end{align*}

\end{cor}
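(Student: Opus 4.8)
The plan is to derive this corollary directly from Theorem 3.4, specializing the control function $\varphi_a$ to each of the four cases in (3.45) and then evaluating the series $\widetilde{\psi}_a$ of (3.19). The first step is to fix the correct sign $j$: I would take $j=1$ when $r,s>1$ and $j=-1$ when $0\le r,s<1$, so that both the decay hypothesis (3.14) and the summability hypothesis (3.15) hold. Indeed, with $\varphi_a(x,y)=\theta(\|x\|_X^r+\|y\|_X^s)$, the expression $2^{nj}\varphi_a(x/2^{nj},y/2^{nj})$ carries the exponents $2^{nj(1-r)}$ and $2^{nj(1-s)}$, which tend to $0$ exactly for these choices of $j$; the same exponents make the series in (3.15) a convergent geometric series. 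The degenerate cases $r=0$ or $s=0$ force the regime $0\le r,s<1$, hence $j=-1$, and are treated identically.

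Once the hypotheses are checked, Theorem 3.4 hands us a unique additive $A$ with $\|f(2x)-8f(x)-A(x)\|_Y\le \tfrac{M^5}{2}[\widetilde{\psi}_a(x)]^{1/p}$, so everything reduces to computing $\widetilde{\psi}_a(x)$ for each $\varphi_a$. The crucial simplification is Lemma 3.1: writing $\varphi_a^p(ax,bx)\le \theta^p(a^{rp}\|x\|_X^{rp}+b^{sp}\|x\|_X^{sp})$ lets me split, inside each of the nine summands of (3.19), the part proportional to $\|x\|_X^{rp}$ (from the first slot) from the part proportional to $\|x\|_X^{sp}$ (from the second slot). Adding the nine first-slot weights $a^{rp}$, each scaled by its coefficient $(5-4k^2)^p$, $k^{2p}$, and so on, produces precisely the bracketed numerator of $\alpha_a^p$; for instance $k^{2p}\varphi_a^p(2x,2x)$ and $(2k^2)^p\varphi_a^p(2x,x)$ combine into the factor $2^{rp}k^{2p}(2^p+1)$. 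Likewise the nine second-slot weights $b^{sp}$ assemble the numerator of $\beta_a^p$, with $3^{sp}$ arising from the $\varphi_a^p(x,3x)$ term.

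It then remains to extract the common factor $k^{-2p}(1-k^2)^{-p}$ and sum the geometric series $\sum_{i=(1+j)/2}^{\infty}2^{ijp(1-r)}$ together with its $s$-analogue. A short computation shows that both the $j=1$ and $j=-1$ versions of this sum equal $2^p/|2^p-2^{rp}|$, which is where the denominators $|2^p-2^{rp}|$ and $|2^p-2^{sp}|$ in $\alpha_a,\beta_a$ come from (and the $2^p-1$ in $\delta_a$ for the constant case $r=s=0$). Assembling these pieces gives $[\widetilde{\psi}_a(x)]^{1/p}\le \tfrac{2\theta}{k^2(1-k^2)}(\alpha_a^p\|x\|_X^{rp}+\beta_a^p\|x\|_X^{sp})^{1/p}$, and the surplus factor $2$ cancels the $\tfrac12$ in (3.18) to produce the stated bound $\tfrac{M^5\theta}{k^2(1-k^2)}(\cdots)$. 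The three special cases are read off by deleting the slot that no longer depends on the vanishing argument, and uniqueness of $A$ is inherited verbatim from Theorem 3.4.

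The main obstacle is purely bookkeeping: faithfully tracking the nine coefficients together with their $a^{rp}$ and $b^{sp}$ weights so that they recombine into the printed expressions for $\delta_a,\alpha_a,\beta_a$, and verifying that the two regimes $r,s>1$ and $0\le r,s<1$ genuinely yield the same absolute-value denominator.
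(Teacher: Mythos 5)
Your proposal is correct and follows exactly the paper's route: the paper's entire proof is the single sentence ``It follows from Theorem 3.4 by putting $\varphi_{a}(x,y):=\theta(\|x\|_X^r+\|y\|_X^s)$,'' and you carry out precisely this specialization, additionally supplying the details the paper omits (the choice $j=1$ for $r,s>1$ versus $j=-1$ for $0\le r,s<1$, the verification of (3.14)--(3.15), and the geometric-series evaluation of $\widetilde{\psi}_a$ yielding $\delta_a$, $\alpha_a$, $\beta_a$ with the factor $2$ cancelling the $\tfrac12$ in (3.18)).
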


\begin{proof}
It follows from Theorem 3.4 by putting  $\varphi_{a}(x,
y):=\theta(\|x\|_X^r+\|y\|_X^s)$ for all $x, y \in X.$
\end{proof}

\begin{cor}\label{t2}
Let $\theta\geq0$ and $r, s>0$ be non-negative real numbers such
that $\lambda:=r+s\neq1$. Suppose that an odd function
$f:X\rightarrow Y$ satisfies the inequality
$$\| D_f(x,y)\|_Y \leq\theta\|x\|_X^r\|y\|_X^s ,\eqno \hspace {3cm} (3.46)$$
for all $x, y \in X.$ Then there exists a unique additive function
 $A:X\rightarrow Y$ satisfying
$$\|f(2x)-8f(x)-A(x)\|_Y \leq\frac{M^5\theta}{k^2(1-k^2)}~~\varepsilon_{a}~\|x\|_X^\lambda
,\eqno \hspace {5cm} $$
for all $x \in X,$ where
\begin{align*}\varepsilon_{a}=\textbf{\{}~\frac{1}{|2^p-2^{\lambda p}|}~[(5-4k^2)^p&+
2^{sp}(4-2k^2)^p+(1+2k)^{rp}+(1-2k)^{rp}+2^p(1+k)^{rp}
\\&+2^p(1-k)^{rp}+k^{2p}(2^{\lambda p}+2^{(r+1)p})+3^{sp}]~\textbf{\}}^\frac{1}{p}
\end{align*}
for all $x \in X.$
\end{cor}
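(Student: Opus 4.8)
The plan is to deduce the corollary directly from Theorem 3.4 by specializing the control function to $\varphi_a(x,y):=\theta\|x\|_X^r\|y\|_X^s$ and then evaluating the resulting constant $\widetilde{\psi}_a(x)$ from (3.19) explicitly. The first step is to fix the sign $j$. Because $\varphi_a$ is homogeneous of degree $\lambda=r+s$, one has $\varphi_a(\tfrac{x}{2^{ij}},\tfrac{y}{2^{ij}})=2^{-ij\lambda}\theta\|x\|_X^r\|y\|_X^s$, so I would choose $j=1$ when $\lambda>1$ and $j=-1$ when $\lambda<1$; the hypothesis $\lambda\neq1$ is precisely what guarantees one of these two regimes applies.

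Second, I verify the two standing hypotheses of Theorem 3.4. Condition (3.14) reduces to $\lim_{n\to\infty}2^{nj(1-\lambda)}\theta\|x\|_X^r\|y\|_X^s=0$, which holds because $j(1-\lambda)<0$ in both of the cases just fixed. Condition (3.15) becomes, for each admissible pair $(x,y)$ with $y\in\{x,2x,3x\}$, a geometric series in $i$ of ratio $2^{pj(1-\lambda)}<1$, hence convergent; note that $\lambda=1$ is exactly the forbidden value that would force this ratio to equal $1$.

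Third, and this is the only genuine computation, I substitute the product form into the definition (3.19) of $\widetilde{\psi}_a$. Every summand there has the shape $c\,\varphi_a^p(\tfrac{\alpha x}{2^{ij}},\tfrac{\beta x}{2^{ij}})=c\,|\alpha|^{rp}|\beta|^{sp}\,\theta^p\,2^{-ij\lambda p}\|x\|_X^{\lambda p}$ for suitable scalars $\alpha,\beta$ and a numerical coefficient $c$, so the prefactor $2^{ipj}$ combines with $2^{-ij\lambda p}$ into the common geometric factor $2^{ipj(1-\lambda)}$, while the products $c\,|\alpha|^{rp}|\beta|^{sp}$ reproduce, term by term, the nine coefficients inside the bracket defining $\varepsilon_a$ (the two $k^{2p}$ contributions from the $(2x,2x)$ and $(2x,x)$ slots merging into $k^{2p}(2^{\lambda p}+2^{(r+1)p})$). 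Summing the geometric part over $i\geq\frac{1+j}{2}$ gives $\sum_i 2^{ipj(1-\lambda)}=\frac{2^p}{|2^p-2^{\lambda p}|}$ in both sign cases, and this factor $2^p$ is exactly what absorbs the $\tfrac12$ appearing in the estimate (3.18).

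Finally, assembling the pieces, $\frac{M^5}{2}[\widetilde{\psi}_a(x)]^{1/p}$ collapses to $\frac{M^5\theta}{k^2(1-k^2)}\,\varepsilon_a\,\|x\|_X^\lambda$, which is the asserted bound, and the existence, additivity, and uniqueness of $A$ are inherited verbatim from Theorem 3.4. The main obstacle is purely the bookkeeping in the third step: one must read off the nine scalar coefficients $c\,|\alpha|^{rp}|\beta|^{sp}$ without error and check that they match $\varepsilon_a$ slot for slot, but no idea beyond Theorem 3.4 is required.
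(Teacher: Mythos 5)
Your proposal is correct and follows exactly the paper's route: the paper's proof of this corollary is the one-line specialization $\varphi_a(x,y):=\theta\|x\|_X^r\|y\|_X^s$ in Theorem 3.4, and your argument simply carries out the verification of (3.14)--(3.15) via the choice of $j$ according to the sign of $\lambda-1$, the term-by-term evaluation of $\widetilde{\psi}_a$, and the geometric summation $\sum_i 2^{ipj(1-\lambda)}=\frac{2^p}{|2^p-2^{\lambda p}|}$, all of which the paper leaves implicit. The bookkeeping you describe (including the merging of the two $k^{2p}$ terms into $k^{2p}(2^{\lambda p}+2^{(r+1)p})$ and the cancellation of the factor $\frac{1}{2}$ against $(2^p)^{1/p}$) checks out against the stated $\varepsilon_a$.
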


\begin{proof}
It follows from  Theorem 3.4 by putting $\varphi_{a}(x,
y):=\theta\|x\|_X^r \|y\|_X^s$ for all $x, y \in X.$

\end{proof}

\begin{thm}\label{t2}
Let $j\in \{-1,1\}$ be fixed and let $\varphi_c:X\times
X\rightarrow [0,\infty)$ be a function such that
$$\lim_{n\rightarrow\infty} 8^{nj}
\varphi_c(\frac{x}{2^{nj}},\frac{y}{2^{nj}})=0 \eqno\hspace
{2cm}(3.47)$$ for all $x,y\in X$ and
$$\sum^{\infty}_{i=\frac{1+j}{2}} 8^{ipj}
{\varphi_c}^p(\frac{x}{2^{ij}},\frac{y}{2^{ij}})<\infty \eqno
\hspace {2cm}(3.48)$$ for all $x\in X$ and for all $y\in
\{x,2x,3x\}.$ Suppose that an odd function $f:X\rightarrow Y$
satisfies the inequality
$$\| D_f(x,y)\|_Y \leq\varphi_c(x,y)\eqno \hspace {4.5cm} (3.49)$$
for all $x,y\in X.$ Then the limit
$$C(x):=\lim_{n\rightarrow\infty} 8^{nj}
[f(\frac{x}{2^{nj-1}})-2f(\frac{x}{2^{nj}})] \eqno \hspace
{2.5cm}(3.50)$$ exists for all $x\in X$ and $C:X\rightarrow Y$ is
a unique cubic function satisfying
$$\|f(2x)-2f(x)-C(x)\|_Y \leq\frac{M^5}{8}[\widetilde{\psi}_c(x)]^\frac{1}{p}\eqno\hspace {2cm}(3.51)$$
for all $x\in X,$ where
\begin{align*}
\widetilde{\psi}_c(x):=\sum^{\infty}_{i=\frac{1+j}{2}}
&8^{ipj}~\textbf{\{}\frac{1}{k^{2p}(1-k^2)^p}
~[~(5-4k^2)^p{\varphi_c}^p(\frac{x}{2^{ij}},\frac{x}{2^{ij}})+k^{2p}
{\varphi_c}^p(\frac{2x}{2^{ij}},\frac{2x}{2^{ij}})\\&+(2k^2)^p
{\varphi_c}^p(\frac{2x}{2^{ij}},\frac{x}{2^{ij}})+
{\varphi_c}^p(\frac{x}{2^{ij}},\frac{3x}{2^{ij}})+(4-2k^2)^p
{\varphi_c}^p(\frac{x}{2^{ij}},\frac{2x}{2^{ij}})\\&+2^p
{\varphi_c}^p(\frac{(1+k)x}{2^{ij}},\frac{x}{2^{ij}}) +2^p
{\varphi_c}^p(\frac{(1-k)x}{2^{ij}},\frac{x}{2^{ij}})
\\&+{\varphi_c}^p(\frac{(1+2k)x}{2^{ij}},\frac{x}{2^{ij}}) +
{\varphi_c}^p(\frac{(1-2k)x}{2^{ij}},\frac{x}{2^{ij}})~]~
\textbf{\}}. \hspace{3.1cm}(3.52)\end{align*}
\end{thm}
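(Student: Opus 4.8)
The plan is to follow the proof of Theorem 3.4 line for line, changing only two ingredients: the scaling rate $2^{nj}$ is replaced by the cubic rate $8^{nj}$, and the auxiliary map $g(x)=f(2x)-8f(x)$ is replaced by $h(x)=f(2x)-2f(x)$. Fix $j=1$. The chain of estimates (3.20)--(3.33) uses nothing about $f$ beyond its oddness and the bound $\|D_f(x,y)\|_Y\leq\varphi_c(x,y)$, so repeating it verbatim with $\varphi_c$ in place of $\varphi_a$ gives
$$\|f(4x)-10f(2x)+16f(x)\|_Y\leq M^5\psi_c(x),$$
where $\psi_c$ is the right-hand side of (3.34) with each $\varphi_a$ replaced by $\varphi_c$. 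The one genuinely new point is algebraic: with $h(x)=f(2x)-2f(x)$ one has
$$h(2x)-8h(x)=\bigl(f(4x)-2f(2x)\bigr)-8\bigl(f(2x)-2f(x)\bigr)=f(4x)-10f(2x)+16f(x),$$
so the estimate above reads $\|h(2x)-8h(x)\|_Y\leq M^5\psi_c(x)$. This is the cubic analogue of (3.36); here $h$ isolates the cubic part (for $f(x)=ax^3+cx$ one gets $h(x)=6ax^3$), whereas $g$ had isolated the additive part.

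Next I would run the direct method at the cube rate. Replacing $x$ by $x/2^{n+1}$ and multiplying by $8^n$ yields
$$\bigl\|8^{n+1}h(\tfrac{x}{2^{n+1}})-8^n h(\tfrac{x}{2^n})\bigr\|_Y\leq M^5\,8^n\psi_c(\tfrac{x}{2^{n+1}}),$$
and, since $Y$ is a $p$-Banach space, Lemma 3.1 gives the telescoped bound $\sum_{i=m}^n 8^{ip}\psi_c^{\,p}(x/2^{i+1})$ for the $p$-norm, exactly as in (3.38). Bounding $\psi_c^{\,p}$ termwise as in (3.39) shows $\sum_i 8^{ip}\psi_c^{\,p}(x/2^i)\leq\widetilde{\psi}_c(x)<\infty$ by (3.48), so $\{8^n h(x/2^n)\}$ is Cauchy; its limit defines $C$, and since $h(x/2^n)=f(x/2^{n-1})-2f(x/2^n)$ this is precisely (3.50). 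Putting $m=0$ and letting $n\to\infty$ gives
$$\|h(x)-C(x)\|_Y^p\leq\frac{M^{5p}}{8^p}\sum_{i=1}^\infty 8^{ip}\psi_c^{\,p}(\tfrac{x}{2^i})\leq\frac{M^{5p}}{8^p}\widetilde{\psi}_c(x),$$
which is (3.51) after taking $p$-th roots and recalling $h(x)=f(2x)-2f(x)$.

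It then remains to prove $C$ is cubic and unique. For the homogeneity $C(2x)=8C(x)$ I would repeat the telescoping computation that gave (3.43), obtaining $\|C(2x)-8C(x)\|_Y\leq M^5\lim_n 8^n\psi_c(x/2^n)=0$ by (3.47). For the equation itself I would use the identity $D_h(x,y)=D_f(2x,2y)-2D_f(x,y)$, so that
$$\|D_C(x,y)\|_Y\leq M\lim_{n\to\infty}8^n\bigl\{\varphi_c(\tfrac{x}{2^{n-1}},\tfrac{y}{2^{n-1}})+2\varphi_c(\tfrac{x}{2^n},\tfrac{y}{2^n})\bigr\}=0$$
again by (3.47); thus $C$ satisfies (1.5). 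Being odd, $C$ is cubic-additive by Lemma 2.2; if $A_0$ denotes its additive summand then $C(2x)-8C(x)=-6A_0(x)$, so $C(2x)=8C(x)$ forces $A_0=0$ and $C$ is cubic.

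Uniqueness follows as in Theorem 3.4: convergence of the series in (3.48) gives the tail estimate $\lim_n 8^{np}\widetilde{\psi}_c(x/2^n)=0$, and then the cubic homogeneity $C(x)=8^nC(x/2^n)$, $C'(x)=8^nC'(x/2^n)$ together with (3.51) applied to both functions yields
$$\|C(x)-C'(x)\|_Y^p\leq 8^{np}\cdot 2\cdot\frac{M^{5p}}{8^p}\widetilde{\psi}_c(\tfrac{x}{2^n})\longrightarrow 0,$$
so $C=C'$; the case $j=-1$ is identical with $8^{-n}$ in place of $8^n$. I expect no conceptual obstacle here, since the whole analysis is inherited from Theorem 3.4; the only care needed is the bookkeeping---using the cube rate $8^{nj}$ consistently and choosing $h(x)=f(2x)-2f(x)$ rather than $g$, so that the additive component is annihilated and the limit (3.50) converges to the cubic part.
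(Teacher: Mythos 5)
Your proposal is correct and follows essentially the same route as the paper: the paper likewise reuses the estimates of Theorem 3.4 verbatim to get $\|f(4x)-10f(2x)+16f(x)\|_Y\leq M^5\psi_c(x)$, sets $h(x)=f(2x)-2f(x)$, runs the direct method at the rate $8^{n}$, proves $C(2x)=8C(x)$ and $D_C=0$ by the same limiting arguments, and invokes Lemma 2.2 plus the homogeneity to kill the additive part before proving uniqueness from the tail estimate $\lim_n 8^{np}\widetilde{\psi}_c(x/2^n)=0$. Your write-up is, if anything, slightly more explicit than the paper's (e.g., the algebraic identity $h(2x)-8h(x)=f(4x)-10f(2x)+16f(x)$ and the identification of the additive summand $-6A_0$), but there is no substantive difference.
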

\begin{proof}
Let $j=1.$ Similar to the proof of Theorem 3.4, we have
$$\|f(4x)-10f(2x)+16f(x)\|\leq M^5\psi_{c}(x), \eqno \hspace {6cm}(3.53)$$
 for all $x\in X,$ where
\begin{align*}\psi_{c}(x)=&
\frac{1}{k^2(1-k^2)}~[(5-4k^2)\varphi_c(x,x)+k^2\varphi_c(2x,2x)\\&+2k^2\varphi_c(2x,x)
+(4-2k^2)\varphi_c(x,2x)+\varphi_c(x,3x)
+2\varphi_c((1+k)x,x)\\&+2\varphi_c((1-k)x,x)
+\varphi_c((1+2k)x,x)+\varphi_c((1-2k)x,x)~] , \hspace
{2.6cm}(3.54)\end{align*} for all $x \in X.$ Letting $h:X \to Y$
be a function defined by $h(x):=f(2x)-2f(x).$ Then, we conclude
that
$$\|h(2x)-8h(x)\|\leq M^5\psi_{c}(x) \eqno\hspace {7.8cm}(3.55)$$ for all $x \in X.$
If we replace $x$ in (3.55) $\frac{x}{2^{n+1}}$ and multiply both
sides of (3.55) by $8^n,$ we get
$$\|8^{n+1}h(\frac{x}{2^{n+1}})-8^nh(\frac{x}{2^n})\|_Y\leq M^5 8^n \psi_{c}(\frac{x}{2^{n+1}})
 \eqno\hspace {5.1cm}(3.56)$$
for all $x\in X$ and all non-negative integers $n$. Since $Y$ is
p-Banach space, then by (3.56), we have
\begin{align*}
\|8^{n+1}h(\frac{x}{2^{n+1}})-8^m h(\frac{x}{2^m})\|_Y^p&\leq
\sum^{n}_{i=m}\|8^{i+1}h(\frac{x}{2^{i+1}})-8^i
h(\frac{x}{2^i})\|_Y^p\\
&\leq M^{5p} \sum^{n}_{i=m} 8^{ip} {\psi_{c}}^p
(\frac{x}{2^{i+1}}) \hspace {4.4cm}(3.57)
\end{align*}
for all non-negative integers $n$ and $m$ with $n\geq m$ and all
$x\in X.$ Since $0< p\leq1$, then by Lemma 3.1, we get from
(3.54),
\begin{align*}{\psi_{c}}^p(x)\leq&
\frac{1}{k^{2p}(1-k^2)^p}~[(5-4k^2)^p{\varphi_c}^p(x,x)+k^{2p}{\varphi_c}^p(2x,2x)
\\&+(2k^2)^p{\varphi_c}^p(2x,x)
+(4-2k^2)^p{\varphi_c}^p(x,2x)+{\varphi_c}^p(x,3x)
+2^p{\varphi_c}^p((1+k)x,x)\\&+2^p{\varphi_c}^p((1-k)x,x)
+{\varphi_c}^p((1+2k)x,x)+{\varphi_c}^p((1-2k)x,x)~]  \hspace
{1.7cm}(3.58)\end{align*}
 for all $x\in X.$ Therefore  it follows from (3.48) and (3.58) that
$$\sum^{\infty}_{i=1} 8^{ip} {\psi_{c}}^p(\frac{x}{2^i})<\infty \eqno\hspace
{7cm}(3.59)\hspace {.2cm}$$ for all $x\in X.$ Therefore we
conclude from (3.57) and (3.59) that the sequence
$\{8^{n}h(\frac{x}{2^n})\}$ is a Cauchy sequence for all $x\in X.$
Since $Y$ is complete, the sequence $\{8^{n}h(\frac{x}{2^n})\}$
converges for all $x\in X.$ So one can define the function
$C:X\rightarrow Y$ by $$C(x)=\lim_{n \to
\infty}8^nh(\frac{x}{2^n}) \eqno\hspace {7cm}(3.60)\hspace
{.2cm}$$ for all $x\in X.$ Letting $m=0$ and passing the limit
$n\rightarrow\infty$ in (3.57), we get
$$\|h(x)-C(x)\|_Y^p\leq M^{5p}\sum^{\infty}_{i=0}8^{ip}{\psi_{c}}^p
(\frac{x}{2^{i+1}})
=\frac{M^{5p}}{8^p}\sum^{\infty}_{i=1}8^{ip}{\psi_{c}}^p(\frac{x}{2^i})
\eqno\hspace {2cm}(3.61)\hspace {.2cm}$$ for all $x\in X.$
Therefore, (3.51) follows from (3.48) and (3.61). Now we show that
$C$ is cubic. It follows from (3.47), (3.56) and (3.60) that
\begin{align*}
\|C(2x)-8C(x)\|_Y &=\lim_{n \to \infty}\|8^n
h(\frac{x}{2^{n-1}})-8^{n+1}h(\frac{x}{2^n})\|_Y\\
&= 8 \lim_{n \to \infty} \|8^{n-1}h(\frac{x}{2^{n-1}})-8^n
h(\frac{x}{2^n})\|_Y \\
& \leq M^5 \lim_{n \to \infty}8^n \psi_{c}(\frac{x}{2^n})=0
\hspace {6cm}\end{align*} for all $x \in X.$ So
$$C(2x)=8C(x)\eqno\hspace {9cm}(3.62)$$ for all $x \in X.$ On the
other hand it follows from (3.47) , (3.49) and (3.50) that
\begin{align*}
\|D_C(x,y)\|_Y&=\lim_{n \to \infty} 8^n
\|D_h(\frac{x}{2^{n}},\frac{y}{2^{n}})\|_Y=\lim_{n \to \infty}8^n
\|D_f(\frac{x}{2^{n-1}},\frac{y}{2^{n-1}})-2
D_f(\frac{x}{2^n},\frac{y}{2^n})\|_Y\\
&\leq M^5\lim_{n \to \infty}
8^n\{\|D_f(\frac{x}{2^{n-1}},\frac{y}{2^{n-1}})\|_Y+2
\|D_f(\frac{x}{2^n},\frac{y}{2^n})\|_Y\}\\
&\leq M^5\lim_{n \to \infty}
8^n\{\varphi_c(\frac{x}{2^{n-1}},\frac{y}{2^{n-1}})+2
\varphi_c(\frac{x}{2^n},\frac{y}{2^n})\}=0
\end{align*}
for all $x,y \in X.$ Hence the function $C$ satisfies $(1.5).$ By
Lemma 2.2, the function $x \rightsquigarrow C(2x)-8C(x)$ is
additive. Hence, (3.62) implies that function $C$ is cubic.\\
To prove the uniqueness of $C,$ let $C^{'}:X \to Y$ be another
additive function satisfying (3.51). Since $$\lim_{n \to
\infty}8^{np}\sum_{i=1}^{\infty}8^{ip}{\varphi_c}^p(\frac{x}{2^{n+i}},\frac{x}{2^{n+i}})
=\lim_{n\to\infty}\sum_{i=n+1}^{\infty}8^{ip}{\varphi_c}^p
(\frac{x}{2^i},\frac{x}{2^i})=0\eqno\hspace{2cm}$$ for all $x\in
X$ and for all $y\in \{x, 2x, 3x\},$ then
$$\lim_{n \to \infty}8^{np}\widetilde{\psi}_c(\frac{x}{2^n})=0\eqno\hspace{7.5cm}(3.63)$$
for all $x \in X$. It follows from (3.51) and (3.63) that
$$\|C(x)-C^{'}(x)\|_Y=\lim_{n \to \infty}8^{np}{\|h(\frac{x}{2^n})-C^{'}(\frac{x}{2^n})\|_Y}^p
\leq \frac{M^{5p}}{8^p}\lim_{n \to
\infty}8^{np}\widetilde{\psi}_c(\frac{x}{2^n})=0$$
for all $x \in X.$ So $C=C^{'}.$\\
For $j=-1$, we can prove the Theorem by a similar technique.
\end{proof}

\begin{cor}\label{t2}
Let $\theta, r, s$ be non-negative real numbers such that $r,s> 3$
or $0\leq r,s<3$. Suppose that an odd function $f:X\rightarrow Y$
satisfies the inequality (3.45)
 for all $x, y \in X.$ Then
there exists a unique cubic function $C:X\rightarrow Y$ satisfying
$$\| f(2x)-2f(x)-C(x)\|_Y \leq \frac{M^5 \theta}{k^2(1-k^2)}
\left\{%
\begin{array}{ll}
    \delta_{c}, & \hbox{r =s =0;} \\
    \alpha_{c}~\|x\|_X^r, & \hbox{r $>$ 0, s=0;} \\
    \beta_{c}~\|x\|_X^s, & \hbox{r=0, s $>$ 0;} \\
    (\alpha_{c}^p~\|x\|_X^{rp}+\beta_{c}^p~\|x\|_X^{sp})^{\frac{1}{p}}, & \hbox{r, s $>$ 0.} \\
\end{array}%
\right.\eqno\hspace{4cm}$$ for all $x \in X,$ where
$$\delta_{c}=\textbf{\{}~\frac{1}{8^p-1}~[(5-4k^2)^p+(4-2k^2)^p+k^{2p}(2^p+1)
+2^{p+1}+3]~\textbf{\}}^\frac{1}{p},\eqno\hspace{2cm}$$
\begin{align*}\alpha_{c}=\textbf{\{}~\frac{1}{|8^p-2^{rp}|}~[(5-4k^2)^p+(4-2k^2)^p+&(1+2k)^{rp}
+(1-2k)^{rp}+2^p(1+k)^{rp}\\&+2^p(1-k)^{rp}+2^{rp}k^{2p}(2^p+1)+1]~\textbf{\}}^\frac{1}{p}
,\end{align*}

\begin{align*}\beta_{c}=\textbf{\{}~\frac{1}{|8^p-2^{sp}|}~[(5-4k^2)^p+2^{sp}(4-2k^2)^p+
k^{2p}(2^{sp}+2^p)+3^{sp}+2^{p+1}+2]~\textbf{\}}^\frac{1}{p}
.\end{align*}

\end{cor}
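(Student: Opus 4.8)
The plan is to deduce this corollary directly from Theorem 3.7, exactly as Corollaries 3.3, 3.5 and 3.6 were deduced from their parent theorems: one only has to feed a concrete control function $\varphi_c$ into the general estimate (3.51)--(3.52) and evaluate the resulting series in closed form. Concretely, I would set $\varphi_c(x,y)$ equal to the right-hand side of (3.45) in each of the four cases, and then choose the sign $j$ so that the decay hypotheses of Theorem 3.7 are met: take $j=1$ when $r,s>3$ and $j=-1$ when $0\le r,s<3$.

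First I would check the two standing hypotheses (3.47) and (3.48). For the monomial bound $\varphi_c(u,v)=\theta(\|u\|_X^r+\|v\|_X^s)$ one computes $8^{nj}\varphi_c(x/2^{nj},y/2^{nj})=\theta\,2^{njp(3-r)/p}\|x\|_X^r+\theta\,2^{nj(3-s)}\|y\|_X^s$, which tends to $0$ precisely because $j(3-r)<0$ and $j(3-s)<0$ under the case distinction above; the very same exponent makes the series in (3.48) a convergent geometric series. Thus Theorem 3.7 applies and furnishes a unique cubic $C$ together with the bound $\|f(2x)-2f(x)-C(x)\|_Y\le \frac{M^5}{8}[\widetilde{\psi}_c(x)]^{1/p}$, so that uniqueness in the corollary is inherited.

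The real content is the evaluation of $\widetilde{\psi}_c(x)$ from (3.52). Each of the nine summands is $\varphi_c^p$ evaluated at arguments that are fixed multiples of $x$ divided by $2^{ij}$, so after taking $p$-th powers every summand factors as a constant (depending on $k$, on the multipliers $1\pm k$ and $1\pm 2k$, and on $p,r,s$) times $2^{-ijrp}\|x\|_X^{rp}$ or $2^{-ijsp}\|x\|_X^{sp}$. I would collect the $i$-independent constants into the bracketed expressions defining $\delta_c,\alpha_c,\beta_c$, pull the common factor $1/(k^{2p}(1-k^2)^p)$ outside, and reduce to $\widetilde{\psi}_c(x)=\frac{[\cdots]}{k^{2p}(1-k^2)^p}\,\|x\|_X^{rp}\sum_{i}8^{ipj}2^{-ijrp}$ and the analogous $s$-term. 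The geometric sum $\sum_i 8^{ipj}2^{-ijrp}=\sum_i 2^{ijp(3-r)}$ evaluates to $8^p/|8^p-2^{rp}|$ for both choices of $j$ — the absolute value is exactly what unifies the two cases — and the leftover $8^p$ cancels the prefactor coming from $\frac{M^5}{8}[\cdot]^{1/p}$. What remains is precisely $\frac{M^5\theta}{k^2(1-k^2)}$ times $\delta_c$, $\alpha_c\|x\|_X^r$, $\beta_c\|x\|_X^s$, or $(\alpha_c^p\|x\|_X^{rp}+\beta_c^p\|x\|_X^{sp})^{1/p}$ according to the case.

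I expect the only genuine obstacle to be bookkeeping in the mixed case $r,s>0$: there $\widetilde{\psi}_c$ splits into an $\|x\|_X^{rp}$-group and an $\|x\|_X^{sp}$-group, and one must sort each of the nine terms of (3.52) into the correct group (those whose \emph{first} slot carries the scaling feed the $r$-part, those whose second slot does feed the $s$-part), keeping careful track of the multipliers $|1\pm k|^{rp}$, $|1\pm 2k|^{rp}$ and of the identity $(2k^2)^p=2^pk^{2p}$. Matching these against the published constants $\alpha_c,\beta_c$ — which differ from their additive counterparts $\alpha_a,\beta_a$ only in the denominator $|8^p-2^{rp}|$ versus $|2^p-2^{rp}|$ — is the main sanity check; the analytic work is otherwise identical to Corollary 3.5, and the $j=-1$ subcase follows verbatim once the summation is started at $i=0$.
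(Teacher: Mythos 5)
Your proposal is correct and follows exactly the paper's route: the paper's entire proof of this corollary is the one-line instantiation $\varphi_c(x,y):=\theta(\|x\|_X^r+\|y\|_X^s)$ in Theorem 3.7 (misprinted there as $\varphi_a$), and your verification of (3.47)--(3.48) via the sign choice $j=1$ for $r,s>3$, $j=-1$ for $0\le r,s<3$, together with the geometric-series evaluation $\sum_i 2^{ijp(3-r)}=8^p/|8^p-2^{rp}|$ and the cancellation against the prefactor $M^5/8$, is precisely the bookkeeping the paper leaves implicit. The constants $\delta_c,\alpha_c,\beta_c$ you recover match the stated ones, so nothing is missing.
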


\begin{proof}
In Theorem 3.7, let $\varphi_{a}(x, y):=\theta(\|x\|_X^r+\|y\|_X^s)$
for all $x, y \in X.$
\end{proof}

\begin{cor}\label{t2}
Let $\theta\geq0$ and $r, s>0$ be non-negative real numbers such
that $\lambda:=r+s\neq3$. Suppose that an odd function
$f:X\rightarrow Y$ satisfies the inequality (3.46) for all $x, y
\in X.$ Then there exists a unique cubic function
 $C:X\rightarrow Y$ satisfying
$$\|f(2x)-2f(x)-C(x)\|_Y \leq\frac{M^5\theta}{k^2(1-k^2)}~~\varepsilon_{c}~\|x\|_X^\lambda
,\eqno \hspace {5cm} $$ for all $x \in X,$ where
\begin{align*}\varepsilon_{c}=\textbf{\{}~\frac{1}{|8^p-2^{\lambda p}|}~[(5-4k^2)^p&+
2^{sp}(4-2k^2)^p+(1+2k)^{rp}+(1-2k)^{rp}+2^p(1+k)^{rp}
\\&+2^p(1-k)^{rp}+k^{2p}(2^{\lambda p}+2^{(r+1)p})+3^{sp}]~\textbf{\}}^\frac{1}{p}
\end{align*}
\end{cor}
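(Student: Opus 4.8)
The plan is to deduce this corollary directly from Theorem 3.7 by specializing the control function to $\varphi_c(x,y):=\theta\|x\|_X^r\|y\|_X^s$, in complete parallel with the way Corollary 3.6 is obtained from Theorem 3.4 (the only changes being the dilation factor $8$ in place of $2$ and the critical exponent $3$ in place of $1$). Once this $\varphi_c$ is shown to satisfy the two standing hypotheses (3.47) and (3.48), the existence and uniqueness of the cubic function $C$ are inherited verbatim from Theorem 3.7, and all that remains is to evaluate the series $\widetilde{\psi}_c(x)$ of (3.52) in closed form so as to recover the constant $\varepsilon_c$.

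First I would fix the sign $j$ according to the position of $\lambda=r+s$ relative to $3$. Substituting the product bound gives
$$8^{nj}\varphi_c\Big(\tfrac{x}{2^{nj}},\tfrac{y}{2^{nj}}\Big)=\theta\,2^{nj(3-\lambda)}\|x\|_X^r\|y\|_X^s,$$
so the limit condition (3.47) forces $j(3-\lambda)<0$; hence I take $j=1$ when $\lambda>3$ and $j=-1$ when $\lambda<3$, the excluded case $\lambda=3$ being precisely what the hypothesis rules out. The same substitution turns the tail sum (3.48) into a geometric series with ratio $2^{pj(3-\lambda)}<1$, which converges for every $y\in\{x,2x,3x\}$ under the same choice of $j$. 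Thus both hypotheses of Theorem 3.7 hold with this single $j$.

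Next I would feed $\varphi_c$ into the definition (3.52). Each factor ${\varphi_c}^p$ there, evaluated at its arguments $(ax/2^{ij},bx/2^{ij})$, equals $\theta^p a^{rp}b^{sp}2^{-ij\lambda p}\|x\|_X^{\lambda p}$, so the bracket pulls out a common factor $\theta^p 2^{-ij\lambda p}\|x\|_X^{\lambda p}$ times the constant
$$(5-4k^2)^p+2^{sp}(4-2k^2)^p+(1+2k)^{rp}+(1-2k)^{rp}+2^p(1+k)^{rp}+2^p(1-k)^{rp}+k^{2p}\big(2^{\lambda p}+2^{(r+1)p}\big)+3^{sp}.$$
The remaining sum $\sum 8^{ipj}2^{-ij\lambda p}=\sum 2^{ipj(3-\lambda)}$ is geometric and, whether it starts at $i=1$ (for $j=1$) or at $i=0$ (for $j=-1$), collapses in both cases to $8^p/|8^p-2^{\lambda p}|$. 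Dividing by $k^{2p}(1-k^2)^p$ and extracting the $p$-th root as prescribed by (3.51), the factor $8^p$ becomes an $8$ that cancels the $\tfrac{1}{8}$ in front of $[\widetilde{\psi}_c]^{1/p}$, the prefactor condenses to $\tfrac{M^5\theta}{k^2(1-k^2)}\|x\|_X^\lambda$, and what is left under the $p$-th root is exactly $\varepsilon_c^{\,p}$.

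The argument is routine once the sign of $j$ is pinned down, so the only genuine care needed is bookkeeping: matching each of the nine product terms to its coefficient so that the bracket reproduces $\varepsilon_c$ term by term, and correctly merging the two cases $j=\pm1$ into the single absolute-value expression $|8^p-2^{\lambda p}|$. A minor point I would keep an eye on is the sign of the integer quantities $1\pm k$, $1\pm2k$ and $1-k^2$ when $|k|\ge2$, which must be read with absolute values; since $\theta\ge0$ and $r,s>0$ this does not affect the convergence analysis, only the transcription of the explicit constant.
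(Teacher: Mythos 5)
Your proposal is correct and takes essentially the same route as the paper: the paper's entire proof of this corollary is the one-line specialization $\varphi_c(x,y):=\theta\|x\|_X^r\|y\|_X^s$ in Theorem 3.7, and your choice of $j$ by the sign of $3-\lambda$, verification of (3.47)--(3.48), and geometric-series evaluation of (3.52) merely fill in the bookkeeping the paper leaves implicit. The sign caveat you raise about $1-k^2$ (and $1\pm k$, $1\pm 2k$) for $|k|\ge 2$ is a defect inherited from the paper's own constants, not a flaw in your argument.
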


\begin{proof}
In Theorem 3.7, let $\varphi_{c}(x, y):=\theta\|x\|_X^r \|y\|_X^s$
for all $x, y \in X.$
\end{proof}

\begin{thm}\label{t2}
Let $j\in \{-1,1\}$ be fixed and let $\varphi:X\times X\rightarrow
[0, \infty)$ be a function such that
\begin{align*}
\lim_{n\rightarrow\infty}\{&(\frac{1+j}{2})2^{nj}
\varphi(\frac{x}{2^{nj}},\frac{y}{2^{nj}})+(\frac{1-j}{2}) 8^{nj}
\varphi(\frac{x}{2^{nj}},\frac{y}{2^{nj}})\}
\\&=0=\lim_{n\rightarrow\infty}\{(\frac{1-j}{2})2^{nj}
\varphi(\frac{x}{2^{nj}},\frac{y}{2^{nj}})+(\frac{1+j}{2}) 8^{nj}
\varphi(\frac{x}{2^{nj}},\frac{y}{2^{nj}})\} \hspace
{2.5cm}(3.64)\end{align*} for all $x,y\in X$ and
\begin{align*}
\sum^{\infty}_{i=\frac{1+j}{2}} \{&(\frac{1+j}{2})2^{ipj}
\varphi^p(\frac{x}{2^{ij}},\frac{x}{2^{ij}})+(\frac{1-j}{2})8^{ipj}
\varphi^p(\frac{x}{2^{ij}},\frac{x}{2^{ij}})\}<\infty, \\&
\sum^{\infty}_{i=\frac{1+j}{2}} \{(\frac{1-j}{2})2^{ipj}
\varphi^p(\frac{x}{2^{ij}},\frac{x}{2^{ij}})+(\frac{1+j}{2})8^{ipj}
\varphi^p(\frac{x}{2^{ij}},\frac{x}{2^{ij}})\}<\infty \hspace
{2.2cm}(3.65)\end{align*}
 for all $x\in X$ and for all $y\in \{x, 2x, 3x\}$. Suppose that
an odd function $f:X\rightarrow Y$  satisfies the inequality
$$\|D_f(x,y)\|_Y \leq\varphi(x,y),\eqno\hspace{6.5 cm} (3.66)$$
for all $x,y\in X.$ Then there exist a unique additive function
$A:X \to Y$ and a unique cubic function $C:X \to Y$ such that
$$\|f(x)-A(x)-C(x)\|_Y\leq \frac{M^6}{48}~( 4[\widetilde{\psi}_a(x)]^{\frac{1}{p}}
+[\widetilde{\psi}_c(x)]^{\frac{1}{p}} )\eqno\hspace{2 cm}(3.67)$$
for all $x \in X,$ where $\widetilde{\psi}_a(x)$ and
 $\widetilde{\psi}_c(x)$ has been defined in (3.19) and (3.52),
respectively, for all $x \in X.$
\end{thm}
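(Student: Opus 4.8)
The plan is to recognize that the hypotheses (3.64) and (3.65) are engineered so as to reduce \emph{simultaneously} to the assumptions of Theorem 3.4 and of Theorem 3.7, taken with $\varphi_a=\varphi_c=\varphi$. Indeed, on specializing the factors $\frac{1\pm j}{2}$ for the fixed $j\in\{-1,1\}$, the first displayed limit in (3.64) is exactly (3.14) and the second is exactly (3.47), while the two series in (3.65) are exactly (3.15) and (3.48). Since $f$ is odd, Theorem 3.4 then supplies a unique additive $A_0:X\to Y$ with $\|f(2x)-8f(x)-A_0(x)\|_Y\le\frac{M^5}{2}[\widetilde{\psi}_a(x)]^{1/p}$, i.e. (3.18), and Theorem 3.7 supplies a unique cubic $C_0:X\to Y$ with $\|f(2x)-2f(x)-C_0(x)\|_Y\le\frac{M^5}{8}[\widetilde{\psi}_c(x)]^{1/p}$, i.e. (3.51). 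I would then set $A:=-\frac16 A_0$ and $C:=\frac16 C_0$, which are again additive and cubic, respectively.

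For existence I would exploit the algebraic identity $6f(x)=[f(2x)-2f(x)]-[f(2x)-8f(x)]$, which gives $6f(x)-C_0(x)+A_0(x)=[f(2x)-2f(x)-C_0(x)]-[f(2x)-8f(x)-A_0(x)]$. Applying the modulus-of-concavity inequality $\|u-v\|_Y\le M(\|u\|_Y+\|v\|_Y)$ to the right-hand side and inserting (3.18) and (3.51) bounds $\|6f(x)-C_0(x)+A_0(x)\|_Y$ by $M\bigl(\frac{M^5}{8}[\widetilde{\psi}_c(x)]^{1/p}+\frac{M^5}{2}[\widetilde{\psi}_a(x)]^{1/p}\bigr)$. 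Since $f(x)-A(x)-C(x)=\frac16\bigl(6f(x)+A_0(x)-C_0(x)\bigr)$, dividing by $6$ yields $\|f(x)-A(x)-C(x)\|_Y\le\frac{M^6}{12}[\widetilde{\psi}_a(x)]^{1/p}+\frac{M^6}{48}[\widetilde{\psi}_c(x)]^{1/p}$, which is precisely (3.67) after collecting the common factor $\frac{M^6}{48}$ (note $\frac{M^6}{12}=4\cdot\frac{M^6}{48}$).

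For uniqueness, suppose $(A',C')$ is another such pair and put $P:=A-A'$ (additive) and $R:=C-C'$ (cubic). Subtracting the two instances of (3.67), using the $p$-norm and Lemma 3.1, gives $\|P(x)+R(x)\|_Y^p\le c\,(4^p\widetilde{\psi}_a(x)+\widetilde{\psi}_c(x))$ for a constant $c$ depending only on $M,p$. Replacing $x$ by $x/2^n$ and multiplying by $2^{np}$, I would use $2^nP(x/2^n)=P(x)$ and $2^nR(x/2^n)=4^{-n}R(x)$ to obtain $\|P(x)+4^{-n}R(x)\|_Y^p\le c\,2^{np}(4^p\widetilde{\psi}_a(x/2^n)+\widetilde{\psi}_c(x/2^n))$; letting $n\to\infty$ and invoking $2^{np}\widetilde{\psi}_a(x/2^n)\to0$ (as in (3.44)) together with $2^{np}\widetilde{\psi}_c(x/2^n)\le 8^{np}\widetilde{\psi}_c(x/2^n)\to0$ (as in (3.63)) forces $P\equiv0$, i.e. $A=A'$. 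With $P=0$ the same inequality reads $\|R(x)\|_Y^p\le c\,(4^p\widetilde{\psi}_a(x)+\widetilde{\psi}_c(x))$; scaling now with the cubic weight, using $8^nR(x/2^n)=R(x)$, gives $\|R(x)\|_Y^p\le c\,8^{np}(4^p\widetilde{\psi}_a(x/2^n)+\widetilde{\psi}_c(x/2^n))$, and passing to the limit yields $R\equiv0$, i.e. $C=C'$.

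I expect the one genuinely delicate point to be the very last limit, namely checking $8^{np}\widetilde{\psi}_a(x/2^n)\to0$: here $\widetilde{\psi}_a$ carries the additive weight $2^{ip}$ and is directly controlled only at the rate (3.44), so the cubic scaling $8^{np}$ is not obviously absorbed. Writing out $8^{np}\widetilde{\psi}_a(x/2^n)$ and reindexing by $m=i+n$ turns it into $\sum_{m>n}4^{(n-m)p}a_m$ with $a_m:=8^{mp}\{\cdots\}$, where $\sum_m a_m=\widetilde{\psi}_c(x)<\infty$ by the second ($8^{ip}$-weighted) series in (3.65); since $4^{(n-m)p}\le1$ for $m>n$, this is dominated by the convergent tail $\sum_{m>n}a_m\to0$. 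This is exactly why both summability conditions in (3.65) are imposed, and it is the crux of separating the cubic contribution from the additive one. The case $j=-1$ is then handled by the symmetric argument, just as in Theorems 3.4 and 3.7.
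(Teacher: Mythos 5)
Your proposal is correct. The existence half is exactly the paper's argument: invoke Theorems 3.4 and 3.7 with $\varphi_a=\varphi_c=\varphi$ (the hypotheses (3.64)--(3.65) specialize to (3.14)--(3.15) and (3.47)--(3.48)), set $A=-\frac16A_0$, $C=\frac16C_0$, and use $6f(x)=[f(2x)-2f(x)]-[f(2x)-8f(x)]$ with the modulus of concavity to obtain (3.67), the constants matching via $\frac{M^6}{12}=4\cdot\frac{M^6}{48}$. The uniqueness half, however, is organized differently, and your version is the more complete one. The paper eliminates the cubic difference first: from its inequality (3.68) it asserts $\lim_{n\to\infty}8^n\|A'(x/2^n)+C'(x/2^n)\|_Y=0$, but that assertion silently uses $8^{np}\widetilde{\psi}_a(x/2^n)\to0$, while the paper only records the weaker limit $2^{np}\widetilde{\psi}_a(x/2^n)\to0$; the stronger limit is true but is nowhere justified in the paper. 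You instead kill the additive difference first, which requires only the easy limits $2^{np}\widetilde{\psi}_a(x/2^n)\to0$ and $2^{np}\widetilde{\psi}_c(x/2^n)\le8^{np}\widetilde{\psi}_c(x/2^n)\to0$, and you defer the delicate point to the cubic step, where you actually prove it: the reindexing $8^{np}\widetilde{\psi}_a(x/2^n)=\sum_{m>n}4^{(n-m)p}a_m$ with $a_m=8^{mp}\{\cdots\}$, dominated by the tail of $\sum_m a_m$, which converges precisely because of the second ($8^{ip}$-weighted) series in (3.65). That tail-domination argument is exactly the lemma the paper's own proof needs and omits; so beyond being correct, your write-up closes a genuine gap in the published uniqueness argument, at the cost of one extra (elementary) reindexing computation.
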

\begin{proof}
Let $j=1.$ By Theorem 3.4 and 3.7 , there exist an additive function
$A_0:X \to Y$ and a cubic function $C_0:X \to Y$ such that
$$~\|f(2x)-8f(x)-A_0(x)\|_Y
\leq\frac{M^5}{2}[\widetilde{\psi}_a(x)]^{\frac{1}{p}},
 \hspace{.8cm}\|f(2x)-2f(x)-C_0(x)\|_Y\leq\frac{M^5}{8}[\widetilde{\psi}_c(x)]^{\frac{1}{p}}$$
for all $x \in X.$ Therefore, it follows from the last inequality
that $$\|f(x)+\frac{1}{6}A_0(x)-\frac{1}{6}C_0(x)\|_Y \leq
\frac{M^6}{48}~( 4[\widetilde{\psi}_a(x)]^{\frac{1}{p}}
+[\widetilde{\psi}_c(x)]^{\frac{1}{p}} )\eqno\hspace{4.4 cm}$$ for
all $x \in X.$ So we obtain $(3.67)$ by letting
$A(x)=-\frac{1}{6}A_0(x)$ and $C(x)=\frac{1}{6}C_0(x)$ for all $x
\in X.$ To prove the uniqueness property of $A$ and $C,$ let
$A_1,C_1:X \to Y$ be another additive and cubic functions satisfying
(3.67). Let $A^{'}=A-A_1$ and $C^{'}=C-C_1.$ So
\begin{align*}
\|A^{'}(x)+C^{'}(x)\|_Y &\leq M\{\|f(x)-A(x)-C(x)\|_Y
+\|f(x)-A_1(x)-C_1(x)\|_Y \}\\&\leq\frac{M^7}{24}~(
4[\widetilde{\psi}_a(x)]^{\frac{1}{p}}
+[\widetilde{\psi}_c(x)]^{\frac{1}{p}} ) \hspace{5.1cm} (3.68)
\end{align*}
for all $x \in x.$ Since
$$\lim_{n \to \infty} 2^{np} \widetilde{\psi}_a(\frac{x}{2^n})=
\lim_{n \to
\infty}8^{np}\widetilde{\psi}_c\frac{x}{2^n})=0\eqno\hspace{5cm}$$
for all $x \in X.$ Then (3.68) implies that $$\lim_{n \to
\infty}8^n\|A^{'}(\frac{x}{2^n})+C^{'}(\frac{x}{2^n})\|_Y=0\eqno\hspace{5.8cm}$$
for all $x \in X.$ Therefore $C^{'}=0.$ So it follows from (3.68)
that
$$\|A^{'}(x)\|_Y \leq
\frac{5M^7}{24}[\widetilde{\psi}_a(x)]^{\frac{1}{p}}\eqno\hspace{6.5cm}$$
for all $x \in X.$ Therefore $A^{'}=0.$\\ For $j=-1$, we can prove
the Theorem by a similar technique.
\end{proof}

\begin{cor}\label{t2}
Let $\theta, r, s$ be non-negative real numbers such that $r,s> 3$
or $1< r,s <3$ or $0\leq r,s<1$. Suppose that an odd function
$f:X\rightarrow Y$ satisfies the inequality (3.45)
 for all $x, y \in X.$ Then
there exists a unique additive function $A:X\rightarrow Y$ and a
unique cubic function $C:X\rightarrow Y$ such that
$$\| f(x)-A(x)-C(x)\|_Y \leq \frac{M^6 \theta}{6k^2(1-k^2)}
\left\{%
\begin{array}{ll}
    \delta_{a}+\delta_{c}, & \hbox{r =s =0;} \\
    (\alpha_{a}+\alpha_{c})~\|x\|_X^r, & \hbox{r $>$ 0, s=0;} \\
    (\beta_{a}+\beta_{c})~\|x\|_X^s, & \hbox{r=0, s $>$ 0;} \\
    \gamma_{a}(x)+\gamma_{c}(x), & \hbox{r, s $>$ 0.} \\
\end{array}%
\right.\eqno\hspace{4cm}$$ for all $x \in X,$ where $\delta_{a},
\delta_{c}, \alpha_{a}, \alpha_{c}, \beta_{a}$ and $\beta_{c}$ are
defined as in Corollaries 3.5 and 3.8 and
$$
\gamma_{a}(x)=\{\alpha_{a}^p~\|x\|_X^{rp}+\beta_{a}^p~\|x\|_X^{sp}\}^\frac{1}{p},
\hspace{1cm}\gamma_{c}(x)=\{\alpha_{c}^p~\|x\|_X^{rp}+\beta_{c}^p~\|x\|_X^{sp}\}^\frac{1}{p}$$
 for all $x \in X.$
\end{cor}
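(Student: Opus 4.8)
The plan is to read this corollary off from the two one-sided power-type corollaries already established. Its hypothesis is the piecewise control (3.45), which is exactly the form fed into Corollary 3.5 (additive) and Corollary 3.8 (cubic), so the first move is to apply both of them to the given odd $f$. Corollary 3.5 yields a unique additive $A_0:X\to Y$ with
$$\|f(2x)-8f(x)-A_0(x)\|_Y\le\frac{M^5\theta}{k^2(1-k^2)}\,B_a(x),$$
and Corollary 3.8 yields a unique cubic $C_0:X\to Y$ with
$$\|f(2x)-2f(x)-C_0(x)\|_Y\le\frac{M^5\theta}{k^2(1-k^2)}\,B_c(x),$$
where $B_a(x)$ and $B_c(x)$ abbreviate the four-case brackets carrying $\delta_a,\alpha_a,\beta_a$ and $\delta_c,\alpha_c,\beta_c$, respectively.

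The second step is purely algebraic and mirrors the proof of Theorem 3.10. From the identity
$$6f(x)=\bigl(f(2x)-2f(x)\bigr)-\bigl(f(2x)-8f(x)\bigr)$$
one gets $6f(x)+A_0(x)-C_0(x)=\bigl(f(2x)-2f(x)-C_0(x)\bigr)-\bigl(f(2x)-8f(x)-A_0(x)\bigr)$, so the quasi-triangle inequality with modulus $M$ followed by division by $6$ gives
$$\Bigl\|f(x)+\tfrac16A_0(x)-\tfrac16C_0(x)\Bigr\|_Y\le\frac{M^6\theta}{6k^2(1-k^2)}\bigl(B_a(x)+B_c(x)\bigr).$$
Putting $A:=-\tfrac16A_0$ (additive, as a scalar multiple of an additive map) and $C:=\tfrac16C_0$ (cubic) turns the left side into $\|f(x)-A(x)-C(x)\|_Y$, and $B_a+B_c$ collapses case by case to $\delta_a+\delta_c$, $(\alpha_a+\alpha_c)\|x\|_X^r$, $(\beta_a+\beta_c)\|x\|_X^s$, and $\gamma_a(x)+\gamma_c(x)$, which is precisely the asserted estimate with constant $M^6\theta/(6k^2(1-k^2))$.

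The delicate point, and the reason for the three specific ranges $r,s>3$, $1<r,s<3$, $0\le r,s<1$, is the choice of contraction direction $j$. Corollary 3.5 applies for $r,s>1$ (direction $j=1$) or $0\le r,s<1$ ($j=-1$), while Corollary 3.8 applies for $r,s>3$ ($j=1$) or $0\le r,s<3$ ($j=-1$); since they are separate corollaries, the additive and cubic pieces may be built with independent directions. The listed ranges are exactly the overlap of these hypotheses, and in the middle range one is forced to take $j=1$ for $A_0$ but $j=-1$ for $C_0$, which is why one cannot simply quote the single-direction Theorem 3.10 here and must invoke the two corollaries separately. The same bookkeeping governs uniqueness: if $(A_1,C_1)$ is a second admissible pair, then $A':=A-A_1$ and $C':=C-C_1$ satisfy a power-type bound on $\|A'(x)+C'(x)\|_Y$, and one kills $C'$ and then $A'$ by rescaling $x\mapsto 2^nx$ or $x\mapsto x/2^n$ in the direction matching the range and using the homogeneities $A'(2^nx)=2^nA'(x)$ and $C'(2^nx)=8^nC'(x)$. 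I expect this direction-matching to be the only genuine obstacle; the remainder is substitution of $\varphi(x,y)=\theta(\|x\|_X^r+\|y\|_X^s)$ and summation of the resulting geometric series.
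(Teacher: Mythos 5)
Your proposal is correct, and it takes a route the paper never spells out: the paper prints no proof of this corollary at all, and by analogy with its other corollaries the intended one-line argument is evidently ``apply Theorem 3.10 with $\varphi(x,y):=\theta(\|x\|_X^r+\|y\|_X^s)$.'' Your derivation from Corollaries 3.5 and 3.8 is genuinely different and in fact repairs a real defect in that one-liner: for a fixed $j$, the hypotheses (3.64)--(3.65) of Theorem 3.10 impose \emph{both} the $2^{nj}$-weighted and the $8^{nj}$-weighted conditions on the same $\varphi$, so for the power-type control they hold only when $r,s>3$ (take $j=1$) or $0\le r,s<1$ (take $j=-1$); the middle range $1<r,s<3$ of the corollary is unreachable from Theorem 3.10 with a single $j$, exactly as you observe, and requires the additive part to be built with $j=1$ (Theorem 3.4/Corollary 3.5) and the cubic part with $j=-1$ (Theorem 3.7/Corollary 3.8). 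Your combination step is then identical to the elimination inside the proof of Theorem 3.10 --- the identity $6f(x)=\bigl(f(2x)-2f(x)\bigr)-\bigl(f(2x)-8f(x)\bigr)$, one quasi-triangle inequality costing a factor $M$, division by $6$, and $A:=-\tfrac{1}{6}A_0$, $C:=\tfrac{1}{6}C_0$ --- and it reproduces exactly the stated constant $\frac{M^6\theta}{6k^2(1-k^2)}$ together with the case-by-case sums $\delta_a+\delta_c$, $(\alpha_a+\alpha_c)\|x\|_X^r$, $(\beta_a+\beta_c)\|x\|_X^s$, $\gamma_a(x)+\gamma_c(x)$. Your uniqueness sketch mirrors the end of Theorem 3.10's proof and does go through, with the one point worth making explicit in the middle range: there you must use the two scalings in opposite directions, killing $C'$ via $8^{-n}\|A'(2^nx)+C'(2^nx)\|=\|4^{-n}A'(x)+C'(x)\|\to 0$ (legitimate since $r,s<3$) and then $A'$ via $2^n\|A'(x/2^n)\|=\|A'(x)\|$ against a bound of order $2^{n(1-r)}$ (legitimate since $r,s>1$). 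In short, the paper's implicit route buys brevity on the two outer ranges; yours buys the middle range, i.e.\ the full statement of the corollary.
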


\begin{cor}\label{t2}
Let $\theta\geq0$ and $r, s>0$ be non-negative real numbers such
that $\lambda:=r+s \in (0,1)\cup(1,3)\cup(3,\infty)$. Suppose that
an odd function $f:X\rightarrow Y$ satisfies the inequality (3.46)
for all $x, y \in X.$ Then there exist  a unique additive function
$A:X\rightarrow Y$ and a unique cubic function $C:X\rightarrow Y$
such that
$$\|f(x)-A(x)-C(x)\|_Y \leq\frac{M^6\theta}{6k^2(1-k^2)}~~
(\varepsilon_{a}+\varepsilon_{c})~\|x\|_X^\lambda ,\eqno \hspace
{5cm} $$ for all $x \in X,$ where $\varepsilon_{a}$ and
$\varepsilon_{c}$ are defined as in Corollaries 3.6 and 3.9.
\end{cor}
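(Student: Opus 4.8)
The plan is to derive the corollary from Theorem 3.10 (and, in the range where a single $j$ will not suffice, from Theorems 3.4 and 3.7 directly) by specializing the control function to the product form $\varphi(x,y):=\theta\|x\|_X^r\|y\|_X^s$. First I would verify hypotheses (3.64)--(3.65). Since $\varphi^p$ is homogeneous, $\varphi^p(x/2^{ij},y/2^{ij})=\theta^p 2^{-ij\lambda p}\|x\|_X^{rp}\|y\|_X^{sp}$, so each series in (3.65) is geometric with ratio $2^{pj(1-\lambda)}$ (the additive, $2^{ipj}$-weighted sum) or $2^{pj(3-\lambda)}$ (the cubic, $8^{ipj}$-weighted sum), and each limit in (3.64) is governed by the same ratios. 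The three subintervals of the hypothesis correspond exactly to the sign choices making these ratios $<1$: for $\lambda>3$ take $j=1$ throughout; for $0<\lambda<1$ take $j=-1$ throughout; and for $1<\lambda<3$ use $j=1$ in the additive construction (Theorem 3.4) and $j=-1$ in the cubic construction (Theorem 3.7), recombining them exactly as in the proof of Theorem 3.10. The restriction $\lambda\neq1,3$ is precisely what keeps every ratio away from $1$ and every denominator $|2^p-2^{\lambda p}|,|8^p-2^{\lambda p}|$ nonzero.

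The heart of the computation is evaluating $\widetilde{\psi}_a(x)$ and $\widetilde{\psi}_c(x)$ in closed form. Substituting the product $\varphi$ into definition (3.19), each of the nine bracket terms becomes $\theta^p 2^{-ij\lambda p}\|x\|_X^{\lambda p}$ times a fixed coefficient, and I would check that the sum of these coefficients is exactly $\varepsilon_a^p\,|2^p-2^{\lambda p}|$, the factors $(5-4k^2)^p$, $2^{sp}(4-2k^2)^p$, $(1\pm2k)^{rp}$, $2^p(1\pm k)^{rp}$, $3^{sp}$ and $k^{2p}(2^{\lambda p}+2^{(r+1)p})$ matching term by term. Pulling this constant and $\theta^p\|x\|_X^{\lambda p}/(k^{2p}(1-k^2)^p)$ out of the sum leaves the geometric series $\sum_i 2^{ipj(1-\lambda)}$, which (with the lower summation index $\tfrac{1+j}{2}$ reconciling the $j=\pm1$ cases) evaluates uniformly to $2^p/|2^p-2^{\lambda p}|$; the factor $|2^p-2^{\lambda p}|$ then cancels and gives
$$[\widetilde{\psi}_a(x)]^{1/p}=\frac{2\theta}{k^2(1-k^2)}\,\varepsilon_a\,\|x\|_X^{\lambda}.$$
The identical bookkeeping on (3.52), now with weight $8^{ipj}$ and geometric sum $\sum_i 2^{ipj(3-\lambda)}=8^p/|8^p-2^{\lambda p}|$, yields $[\widetilde{\psi}_c(x)]^{1/p}=\tfrac{8\theta}{k^2(1-k^2)}\,\varepsilon_c\,\|x\|_X^{\lambda}$.

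Finally I would substitute both expressions into the general estimate (3.67), where the arithmetic collapses cleanly:
$$\frac{M^6}{48}\Big(4\cdot\frac{2\theta}{k^2(1-k^2)}\varepsilon_a+\frac{8\theta}{k^2(1-k^2)}\varepsilon_c\Big)\|x\|_X^{\lambda}=\frac{8M^6\theta}{48\,k^2(1-k^2)}(\varepsilon_a+\varepsilon_c)\|x\|_X^{\lambda}=\frac{M^6\theta}{6k^2(1-k^2)}(\varepsilon_a+\varepsilon_c)\|x\|_X^{\lambda},$$
which is the asserted bound; uniqueness of $A$ and $C$ follows by the same argument as in Theorems 3.4, 3.7 and 3.10. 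I expect the main obstacle to be neither the geometric summation nor the final arithmetic but the bookkeeping in the middle range $1<\lambda<3$: there no single $j$ satisfies (3.64)--(3.65), so one must confirm that the additive and cubic pieces may be reconstructed with opposite contraction directions and still recombine through the identity $6f(x)=(f(2x)-2f(x))-(f(2x)-8f(x))$ underlying (3.67). Verifying that the closed forms for $\widetilde{\psi}_a$ and $\widetilde{\psi}_c$, and hence the final constant, are unchanged by this mixed choice of $j$ is the one place demanding care.
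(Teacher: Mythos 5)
Your proposal is correct, and it implements exactly the argument the paper intends but never writes down: this corollary appears in the paper with no proof at all, the implicit recipe (by analogy with Corollaries 3.6 and 3.9) being the one-line specialization $\varphi(x,y):=\theta\|x\|_X^r\|y\|_X^s$ in Theorem 3.10. Your bookkeeping checks out: the nine terms produced by substituting the product $\varphi$ into (3.19) carry precisely the coefficients in the bracket defining $\varepsilon_a$ (note $(2k^2)^p2^{rp}=k^{2p}2^{(r+1)p}$, which combines with the $k^{2p}2^{\lambda p}$ term to give $k^{2p}(2^{\lambda p}+2^{(r+1)p})$), the geometric sums do evaluate to $2^p/|2^p-2^{\lambda p}|$ and $8^p/|8^p-2^{\lambda p}|$ for either admissible $j$, and $\frac{M^6}{48}\bigl(4\cdot 2\,\varepsilon_a+8\,\varepsilon_c\bigr)\frac{\theta}{k^2(1-k^2)}=\frac{M^6\theta}{6k^2(1-k^2)}(\varepsilon_a+\varepsilon_c)$ reproduces the stated constant. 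More significantly, you caught a genuine gap that the paper silently glosses over: for $1<\lambda<3$ neither $j=1$ nor $j=-1$ satisfies (3.64)--(3.65) (with $j=1$ the $8^{ip}$-weighted series requires $\lambda>3$; with $j=-1$ the $2^{-ip}$-weighted series requires $\lambda<1$), so Theorem 3.10 as stated is literally inapplicable on the middle interval, and one must, as you propose, run Theorem 3.4 with $j=1$ and Theorem 3.7 with $j=-1$ and recombine through $6f(x)=(f(2x)-2f(x))-(f(2x)-8f(x))$; the existence estimate then goes through verbatim since the two theorems have independent hypotheses. The one step you flag but leave unwritten, uniqueness in the mixed range, genuinely needs a different scaling from the proof of Theorem 3.10, because $8^{np}\widetilde{\psi}_a(x/2^n)$ behaves like $2^{np(3-\lambda)}$ and does not tend to $0$ when $\lambda<3$: instead, from $\|A'(x)+C'(x)\|_Y\le c\|x\|_X^{\lambda}$ substitute $2^nx$ and divide by $8^n$ to force $C'=0$ (using $\lambda<3$), then use additivity to write $\|A'(x)\|_Y=2^n\|A'(x/2^n)\|_Y\le c\,2^{n(1-\lambda)}\|x\|_X^{\lambda}\to0$ (using $\lambda>1$), so $A'=0$. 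With those two lines added, your proof is complete and is in fact more careful than the paper, which proves nothing here.
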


\begin{thm}\label{t2}
Let $\varphi:X\times X\rightarrow [0, \infty)$ be a function which
satisfies (3.1) for all $x,y\in X$ and (3.2) for all $x\in X$ or
satisfies (3.64) for all $x,y\in X$ and (3.65) for all $x\in X$
 and for all $~y\in \{x,2x, 3x\}.$
Suppose that a function $f:X\rightarrow Y$ with $f(0)=0$ satisfies
the inequality (3.3) for all $x,y\in X.$ Then there exist a unique
additive function $A:X \to Y,$ a unique quadratic function $Q:X
\to Y,$ and a unique cubic function $C:X \to Y$ such that
\begin{align*}
\|f(x)-A(x)-Q(x)-C(x)\|_Y
\leq\frac{M^8}{96}&\textbf{\{}4[\widetilde{\psi}_a(x)+\widetilde{\psi}_a(-x)]^{\frac{1}{p}}
+[\widetilde{\psi}_c(x)+\widetilde{\psi}_c(-x)]^{\frac{1}{p}}\textbf{\}}\\&+
\frac{M^3}{4k^2}\textbf{\{}[\widetilde{\psi_e}(x)+\widetilde{\psi_e}(-x)]^{\frac{1}{p}}\textbf{\}}
 \hspace{2.6cm}(3.69)
\end{align*}
 for all $x \in X,$ where $\widetilde{\psi_e}(x),
 \widetilde{\psi}_a(x)$ and $\widetilde{\psi}_c(x)$ have been
 defined in (3.2), (3.19) and (3.52), respectively, for all $x \in X.$
\end{thm}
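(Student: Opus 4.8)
The plan is to reduce the statement to the two stability results already established, by splitting $f$ into its even and odd parts. Put $f_e(x)=\frac12(f(x)+f(-x))$ and $f_o(x)=\frac12(f(x)-f(-x))$, so that $f=f_e+f_o$, where $f_e$ is even with $f_e(0)=0$ and $f_o$ is odd. The first step is to transfer the control (3.3) to $f_e$ and $f_o$. Since the operator $D$ is linear in $f$ and since the function $x\mapsto f(-x)$ has difference $D_f(-x,-y)$, one checks directly that $D_{f_e}(x,y)=\frac12(D_f(x,y)+D_f(-x,-y))$ and $D_{f_o}(x,y)=\frac12(D_f(x,y)-D_f(-x,-y))$. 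Hence, using (3.3) and the modulus of concavity $M$ of $\|\cdot\|_Y$, both parts satisfy
$$\|D_{f_e}(x,y)\|_Y\le\Phi(x,y),\qquad \|D_{f_o}(x,y)\|_Y\le\Phi(x,y),$$
where $\Phi(x,y):=\frac{M}{2}\,(\varphi(x,y)+\varphi(-x,-y))$ for all $x,y\in X$.

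Next I would check that $\Phi$ inherits from $\varphi$ the hypotheses needed to invoke Theorem 3.2 on $f_e$ and Theorem 3.10 on $f_o$. The limit conditions pass to $\Phi$ because $\Phi$ is a fixed multiple of $\varphi$ evaluated at $(\pm x,\pm y)$, and the summability conditions pass by Lemma 3.1: since $0<p\le1$ we have $\Phi^p(u,v)\le(\frac{M}{2})^p(\varphi^p(u,v)+\varphi^p(-u,-v))$, and because every argument pair occurring in $\widetilde{\psi}_a$ and $\widetilde{\psi}_c$ (see (3.19), (3.52)) negates consistently, each $\Phi$-series splits cleanly into the corresponding $\varphi$-series evaluated at $x$ and at $-x$. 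Applying Theorem 3.2 to the even function $f_e$ therefore yields a unique quadratic $Q$ with $\|f_e(x)-Q(x)\|_Y\le\frac{M^2}{4k^2}[\widetilde{\psi}_e(x)+\widetilde{\psi}_e(-x)]^{1/p}$, while applying Theorem 3.10 to the odd function $f_o$ yields a unique additive $A$ and a unique cubic $C$ with $\|f_o(x)-A(x)-C(x)\|_Y\le\frac{M^7}{96}\,(4[\widetilde{\psi}_a(x)+\widetilde{\psi}_a(-x)]^{1/p}+[\widetilde{\psi}_c(x)+\widetilde{\psi}_c(-x)]^{1/p})$; in each case the extra factor $\frac{M}{2}$ is precisely the cost of replacing $\varphi$ by $\Phi$.

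The estimate (3.69) then follows from the quasi-triangle inequality: writing $f=f_e+f_o$ gives $\|f(x)-A(x)-Q(x)-C(x)\|_Y\le M(\|f_e(x)-Q(x)\|_Y+\|f_o(x)-A(x)-C(x)\|_Y)$, and substituting the two bounds produces exactly the constants $\frac{M^3}{4k^2}$ and $\frac{M^8}{96}$ of (3.69). For uniqueness, suppose $(A',Q',C')$ is another admissible triple. Since additive and cubic functions are odd while quadratic functions are even, the even part of $f-(A'+Q'+C')$ equals $f_e-Q'$ and its odd part equals $f_o-A'-C'$; applying (3.69) at $x$ and at $-x$ and symmetrizing shows that $Q'$ approximates $f_e$ and $A'+C'$ approximates $f_o$ within bounds of the same decaying type. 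The uniqueness clauses of Theorem 3.2 and Theorem 3.10 then force $Q'=Q$, $A'=A$, and $C'=C$. The two admissible branches of the hypothesis (equivalently, the sign of $j$, which only determines whether one scales by $x/k^{n}$ or by $k^{n}x$) are handled by the identical argument.

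The main obstacle I anticipate is bookkeeping rather than conceptual. One must track how the modulus of concavity $M$ propagates through the symmetrization bound for $\Phi$, through each of the two invoked theorems, and through the final quasi-triangle step, so that the powers $M^3$ and $M^8$ emerge exactly as stated; and one must verify that the arguments inside $\widetilde{\psi}_a$ and $\widetilde{\psi}_c$ negate correctly so that the $\Phi$-series decompose precisely into their values at $x$ and at $-x$, which is what makes the right-hand side of (3.69) finite and the uniqueness reduction valid.
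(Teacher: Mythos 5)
Your proposal follows essentially the same route as the paper's own proof: decompose $f$ into even and odd parts, bound $\|D_{f_e}(x,y)\|_Y$ and $\|D_{f_o}(x,y)\|_Y$ by $\frac{M}{2}(\varphi(x,y)+\varphi(-x,-y))$, apply Theorem 3.2 to $f_e$ and Theorem 3.10 to $f_o$, absorb the factor $(\frac{M}{2})^p$ by the symmetrization $\widetilde{\psi}(x)\leq \frac{M^p}{2^p}[\widetilde{\psi}(x)+\widetilde{\psi}(-x)]$, and combine via the quasi-triangle inequality to get exactly the constants $\frac{M^3}{4k^2}$ and $\frac{M^8}{96}$ of (3.69). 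Your explicit uniqueness argument (splitting a competing triple $(A',Q',C')$ into even and odd parts and invoking the uniqueness clauses of Theorems 3.2 and 3.10) spells out a step the paper leaves implicit, but it is the natural completion of the same argument rather than a different approach.
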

\begin{proof}
Assume that $\varphi:X\times X\rightarrow [0, \infty)$
 satisfies (3.1) for all $x,y\in X$ and (3.2) for
all $x\in X.$ Let $f_e(x)=\frac{1}{2}(f(x)+f(-x))$ for all $x\in
X.$ Then $f_e(0)=0,$ $f_e(-x)=f_e(x)$ and
$$\|D_{f_e}(x,y)\|\leq\frac{M}{2}[\varphi(x,y)+\varphi(-x,-y)]\eqno
\hspace {5.2cm}$$ for all $x,y\in X.$ Hence, from Theorem 3.2,
there exists a unique quadratic function $Q:X\rightarrow Y$
satisfying
$$\|f_{e}(x)-Q(x)\|_{Y}\leq \frac{M}{2k^2}
~[\widetilde{\psi_e}(x)]^\frac{1}{p}.\eqno \hspace {6.4cm}(3.70)$$
for all $x\in X.$ It is clear that
$$\widetilde{\psi_e}(x)\leq \frac{M^p}{2^p}
~[\widetilde{\psi_e}(x)+\widetilde{\psi_e}(-x)],\eqno \hspace
{7.6cm}$$ for all $x\in X.$ Therefore it follows from (3.70) that
$$\|f_{e}(x)-Q(x)\|_{Y}\leq \frac{M^2}{4k^2}
~[\widetilde{\psi_e}(x)+\widetilde{\psi_e}(-x)]^\frac{1}{p}.\eqno
\hspace {5cm}(3.71)$$ Let $f_o(x)=\frac{1}{2}(f(x)-f(-x))$ for all
$x\in X.$ Then $f_o(0)=0,$ $f_o(-x)=-f_o(x)$ and
$$\|D_{f_o}(x,y)\|\leq\frac{M}{2}[\varphi(x,y)+\varphi(-x,-y)]\eqno
\hspace {6.4cm}$$ for all $x,y\in X.$ By Theorem 3.10,  there exist
a unique additive function $A:X \to Y$ and a unique cubic function
$C:X \to Y$ satisfy
$$\|f_{o}(x)-A(x)-C(x)\|_{Y}\leq \frac{M^6}{48}~( 4[\widetilde{\psi}_a(x)]^{\frac{1}{p}}
+[\widetilde{\psi}_c(x)]^{\frac{1}{p}} )\eqno \hspace
{3.3cm}(3.72)$$ for all $x\in X.$ Since
$$\widetilde{\psi_a}(x)\leq \frac{M^p}{2^p}
~[\widetilde{\psi_a}(x)+\widetilde{\psi_a}(-x)],\hspace{1cm}
\widetilde{\psi_c}(x)\leq\frac{M^p}{2^p}
~[\widetilde{\psi_c}(x)+\widetilde{\psi_c}(-x)]\eqno
\hspace{1cm}$$ for all $x\in X.$ Therefore it follows from (3.72)
that
$$\|f_{o}(x)-A(x)-C(x)\|_{Y}\leq \frac{M^7}{96}~\textbf{\{}4[\widetilde{\psi}_a(x)
+\widetilde{\psi}_a(-x)]^{\frac{1}{p}}
+[\widetilde{\psi}_c(x)+\widetilde{\psi}_c(-x)]^{\frac{1}{p}}\textbf{\}}
\eqno \hspace {.8cm}(3.73)$$
 for all $x \in X.$ Hence (3.69) follows from (3.71) and (3.73).
Now, if $\varphi:X\times X\rightarrow [0, \infty)$ satisfies
(3.64) for all $x,y\in X$ and (3.65) for all $x\in X$
 and for all $y\in \{x,2x, 3x\},$ we can prove the Theorem by a
 similar technique.
\end{proof}

\begin{cor}\label{t2}
Let $\theta, r, s$ be non-negative real numbers such that $r,s> 3$
or $2< r,s <3$ or $1< r,s <2$ or $0< r,s<1$. Suppose that a
function $f:X\rightarrow Y$ with $f(0)=0$ satisfies the inequality
(3.13)
 for all $x, y \in X.$ Then
there exist a unique additive function $A:X\rightarrow Y$ and a
unique quadratic function $Q:X\rightarrow Y$ and a unique cubic
function $C:X\rightarrow Y$ such that
\begin{align*}\| f(x)-A(x)-Q(x)-C(x)\|_Y \leq &\frac{M^8
\theta}{6k^2(1-k^2)}\textbf{\{}[\alpha_{a}^p~\|x\|_X^{rp}+\beta_{a}^p~\|x\|_X^{sp}]^\frac{1}{p}
\\&\hspace{.2cm}+[\alpha_{c}^p~\|x\|_X^{rp}+\beta_{c}^p~\|x\|_X^{sp}]^\frac{1}{p}\textbf{\}}
+\frac{M^3 \theta}{2}
[\frac{1}{|k^{2p}-k^{sp}|}~\|x\|_X^{sp}]^\frac{1}{p}
\hspace{1cm}\end{align*}
 for all $x \in X,$ where $ \alpha_{a}, \alpha_{c}, \beta_{a}$ and $\beta_{c}$ are
defined as in Corollaries 3.5 and 3.8.
\end{cor}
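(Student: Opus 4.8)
The statement is the power-weight specialization of the full additive--quadratic--cubic Theorem 3.13, obtained by taking $\varphi(x,y):=\theta(\|x\|_X^r+\|y\|_X^s)$, which is exactly the control function in (3.13). Rather than quote Theorem 3.13 with a single fixed $j$, I would run its even/odd scheme directly on this $\varphi$, because the decisive feature of a power weight is its invariance under $(x,y)\mapsto(-x,-y)$: since $\|-x\|_X=\|x\|_X$ one has $\varphi(-x,-y)=\varphi(x,y)$, so the parts $f_e(x)=\tfrac12(f(x)+f(-x))$ and $f_o(x)=\tfrac12(f(x)-f(-x))$ satisfy $\|D_{f_e}(x,y)\|_Y\le\tfrac{M}{2}[\varphi(x,y)+\varphi(-x,-y)]=M\,\varphi(x,y)$ and likewise $\|D_{f_o}(x,y)\|_Y\le M\,\varphi(x,y)$, using $D_{f(-\cdot)}(x,y)=D_f(-x,-y)$. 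Thus each of $f_e,f_o$ again obeys a power bound, now with $\theta$ replaced by $M\theta$, and may be fed into the corollaries already established for power weights.

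For the even part, $f_e$ meets the hypothesis of Corollary 3.3 with constant $M\theta$, and each of the four windows $r,s>3$, $2<r,s<3$, $1<r,s<2$, $0<r,s<1$ lies entirely inside one of the two admissible regimes $r,s>2$ or $0\le r,s<2$ of that corollary. Hence Corollary 3.3 furnishes the unique quadratic $Q$ with $\|f_e(x)-Q(x)\|_Y\le\tfrac{M^2\theta}{2}\bigl(\tfrac{1}{|k^{2p}-k^{sp}|}\|x\|_X^{sp}\bigr)^{1/p}$; note that only the exponent $s$ survives, since $\widetilde{\psi}_e$ sees $\varphi$ solely through $\varphi(0,\cdot)=\theta\|\cdot\|_X^s$. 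This is the origin of the last summand of the asserted bound, one further $M$ being added when $f_e-Q$ is recombined below.

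For the odd part, Corollary 3.5 applied to $f_o$ (again with $M\theta$) gives the unique additive $A_0$ governed by $\alpha_a,\beta_a$, and Corollary 3.8 gives the unique cubic $C_0$ governed by $\alpha_c,\beta_c$; using $6f_o(x)=(f_o(2x)-2f_o(x))-(f_o(2x)-8f_o(x))$ and setting $A=-\tfrac16A_0$, $C=\tfrac16C_0$, the two estimates combine into $\|f_o(x)-A(x)-C(x)\|_Y\le\tfrac{M^7\theta}{6k^2(1-k^2)}(\gamma_a(x)+\gamma_c(x))$ with $\gamma_a,\gamma_c$ as in Corollary 3.11. The delicate point here, and the reason the admissible set is split into four disjoint windows avoiding $1,2,3$, is that the three summability exponents differ: the quadratic series converges only for $r,s>2$ or $r,s<2$, the additive series only for $r,s>1$ or $r,s<1$, and the cubic series only for $r,s>3$ or $r,s<3$. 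On the middle windows one is forced to take, for instance, $j=1$ in the additive construction but $j=-1$ in the cubic one (with whichever sign is legal for the quadratic), so no single fixed $j$ serves all three components at once; this is precisely why I would invoke Corollaries 3.3, 3.5 and 3.8 separately, each with its own admissible $j$, rather than apply Theorem 3.13 monolithically.

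Finally I would assemble through $f-A-Q-C=(f_e-Q)+(f_o-A-C)$ and apply condition (3) of Definition 1.1, namely $\|f-A-Q-C\|_Y\le M(\|f_e-Q\|_Y+\|f_o-A-C\|_Y)$; this contributes one more factor $M$ to each term and produces the prefactors $\tfrac{M^3\theta}{2}$ and $\tfrac{M^8\theta}{6k^2(1-k^2)}$ exactly as stated, with $\alpha_a,\beta_a,\alpha_c,\beta_c$ the constants of Corollaries 3.5 and 3.8 (whose closed forms already incorporate the geometric sums and the splitting $(\|x\|_X^r+\|y\|_X^s)^p\le\|x\|_X^{rp}+\|y\|_X^{sp}$ supplied by Lemma 3.1). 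Uniqueness of $A$, $Q$ and $C$ is inherited termwise from those corollaries. The main obstacle is not any individual inequality but the combined bookkeeping: verifying that each of the four windows simultaneously admits a legal, generally component-dependent, choice of $j$ for all three series, and tracking the accumulation of $M$-factors through the two symmetrizations and the two quasi-triangle inequalities.
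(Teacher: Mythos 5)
Your proposal is correct and follows essentially the paper's own route: the paper likewise exploits the symmetry $\varphi(-x,-y)=\varphi(x,y)$ for $\varphi(x,y)=\theta(\|x\|_X^r+\|y\|_X^s)$ to get $\|D_{f_e}(x,y)\|_Y\le M\varphi(x,y)$ and $\|D_{f_o}(x,y)\|_Y\le M\varphi(x,y)$, and then invokes Corollary 3.3 for the even part and Corollary 3.11 for the odd part before recombining with one quasi-triangle inequality. Your only deviation is cosmetic: where the paper quotes Corollary 3.11 outright, you inline its derivation from Corollaries 3.5 and 3.8 via $6f_o(x)=(f_o(2x)-2f_o(x))-(f_o(2x)-8f_o(x))$ (which is exactly how the paper obtains Theorem 3.10 and hence Corollary 3.11), arriving at the same constants $\frac{M^8\theta}{6k^2(1-k^2)}$ and $\frac{M^3\theta}{2}$; your added bookkeeping about the component-dependent choice of $j$ on the middle windows is sound and in fact more explicit than the paper's two-line proof.
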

\begin{proof}
 Put $\varphi(x,y):=\theta(\|x\|_X^r+\|y\|_X^s)$, since $$\|D_{f_e}(x,y)\|\leq
M\varphi(x,y),\hspace {1cm} \|D_{f_o}(x,y)\|\leq
M\varphi(x,y)\eqno \hspace {3cm}$$ for all $x, y \in X.$ Thus the
result follows from Corollaries 3.3 and 3.11.
\end{proof}

{\small


}

\begin{thebibliography}{99}

\bibitem{Ac}  J. Aczel and J. Dhombres, Functional Equations in
Several Variables,{\it Cambridge Univ. Press,} 1989.


\bibitem{A}  D. Amir,  Characterizations of inner product spaces.
 {\it Operator Theory: Advances and Applications, 20. Birkhäuser Verlag, Basel},
  1986. vi+200 pp. ISBN: 3-7643-1774-4

\bibitem{Be-Li} Y. Benyamini and  J. Lindenstrauss,
 {\it Geometric Nonlinear Functional Analysis,} vol. 1, Colloq. Publ. vol. 48, Amer. Math. Soc., Providence, RI,
2000.

\bibitem{Ch} P. W. Cholewa, Remarks on the stability of functional
equations,{\it Aequationes Math.} 27 (1984) 76-86.


\bibitem {Cz} S. Czerwik, On the stability of the quadratic mapping in
normed spaces,{\it Abh. Math. Sem. Univ. Hamburg} 62 (1992)
59--64.

\bibitem{Ga} Z. Gajda, On stability of additive mappings,
{\it Internat. J. Math. Math. Sci.} 14(1991) 431-434.

\bibitem{G} P. G\v avruta, A generalization of the
Hyers-Ulam-Rassias stability of approximately additive mappings,
{\it J. Math. Anal. Appl.} 184 (1994) 431-436.


\bibitem {Gr} A. Grabiec, The generalized Hyers--Ulam stability of a
class of functional equations,{\it Publ. Math. Debrecen} 48 (1996)
217-235.


\bibitem{Hy} D. H. Hyers, On the stability of the linear
functional equation,{\it Proc. Natl. Acad. Sci.} 27 (1941)
222-224.

\bibitem{Jo} P. Jordan and  J. Von Neumann, On inner product in linear
metric spaces, {\it Ann. of Math.} 36(1935) 719-723.


\bibitem{J-K}  K. W. Jun and H. M. Kim, The generalized
Hyers-Ulam-Russias stability of a cubic functional equation,{\it
J. Math. Anal. Appl.} 274, (2002), no. 2, 267-278.

\bibitem{J-K1}  K. W. Jun and H. M. Kim, Ulam stability problem
for a mixed type of cubic and additive functional equation,{\it
Bull. Belg. Math. Soc. simon Stevin} 13(2006) 271-285.



\bibitem {k} Pl. Kannappan, Quadratic functional equation and inner
product spaces,{\it Results Math.} 27 (1995) 368-372.

\bibitem{Na-M}  A. Najati and  M. B. Moghimi, Stability of a functional
equation deriving from quadratic and additive function in
quasi-Banach spaces,{\it J. Math. Anal. Appl.} 337 (2008) 399-415.


\bibitem{Na-Za}  A. Najati and  G. Zamani Eskandani, Stability of a mixed additive and cubic functional equation
in quasi-Banach spaces,{\it J. Math. Anal. Appl.} 342 (2008)
1318–1331.


\bibitem {R3} Th. M. Rassias, On the stability of the linear
mapping in Banach spaces,{\it Proc. Amer. Math. Soc.} 72 (1978)
297-300.

\bibitem{Ao} S. Rolewicz, Metric Linear Spaces,{\it PWN-Polish Sci. Publ., Warszawa,
Reidel, Dordrecht,} 1984. MR0802450 (88i:46004a)

\bibitem {S} F. Skof, Proprietà locali e approssimazione di operatori,
{\it Rend. Sem. Mat. Fis. Milano,} 53 (1983),. 113–129.


\bibitem {T} J. Tabor, stability of the Cauchy functional equation
in quasi-Banach spaces, {\it Ann. Polon. Math.} 83(2004) 243-255.


\bibitem {Ul} S. M. Ulam, Problems in Modern Mathematics,{\it Chapter
VI, Science ed., Wiley, New York,} 1940.






























\end{thebibliography}
\end{document}